\newcommand{\dm}{r^{n-1} dr}
\newcommand{\cO}{\mathcal O}
\newcommand{\R}{{\mathbb R}}
\newcommand{\e}{\epsilon}
\newcommand{\bu}{{\bar u}}
\newcommand{\ba}{{\bar a}}
\newcommand{\x}{{\bf x}}
\newcommand{\be}{\begin{equation}}
\newcommand{\bee}{\begin{equation*}}
\newcommand{\ee}{\end{equation}}
\newcommand{\eee}{\end{equation*}}
\newcommand{\bs}{\begin{split}}
\newcommand{\esp}{\end{split}}
\newcommand{\bbox}{ \Big ( \frac{d }{dt}-\Delta_{M_t} \Big )}
\newcommand{\cD}{{\mathcal D}}
\newtheorem{thm}{Theorem}[section]
\newtheorem{cor}[thm]{Corollary}
\newtheorem{lemma}[thm]{Lemma}
\newtheorem{prop}[thm]{Proposition}
\newtheorem{claim}{Claim}[section]
\newtheorem{defn}{Definition}[section]
\theoremstyle{remark}
\newtheorem{remark}{Remark}[section]
\numberwithin{equation}{section}
\theoremstyle{definition}
\newcommand{\bremark}{\begin{remark} \em}
\newcommand{\eremark}{\end{remark} }
\title{Uniqueness of entire graphs evolving by \\Mean Curvature flow}
\author{Panagiota Daskalopoulos}
\address{ {\bf P. Daskalopoulos:} Department of Mathematics, Columbia University,  New York, NY 10027, USA.}
\email{pdaskalo@math.columbia.edu}
\thanks{P.~Daskalopoulos is supported by the  NSF award  DMS-1900702}
\author{Mariel Saez}
\address{ {\bf M. Saez:}  Departamento de matem\'aticas, P. Universidad Cat\'olica de Chile,  Santiago, Chile }
\email{mariel@mat.uc.cl}
\thanks{ M. S\'aez is supported by the grant Fondecyt Regular 1190388}
\begin{document} 

\begin{abstract}
In this paper we study the uniqueness of graphical mean curvature flow with  locally Lipschitz initial data. 
We first prove  that rotationally  symmetric entire graphs are  unique,  without any further assumptions. Our methods also give an alternative simple proof of uniqueness in the one dimensional  case.  
 In the general case,   we establish the uniqueness of entire proper graphs that satisfy  a   uniform lower bound  on the second fundamental form. 
The latter result extends to initial conditions that are proper graphs over subdomains of  $ \mathbb{R}^n$. A consequence of our  result is  the uniqueness of convex
entire graphs, which allow us to prove that Hamilton's Harnack estimate holds for mean curvature flow solutions that are convex entire graphs. 

\end{abstract}

\maketitle 

\tableofcontents

\section{Introduction}

The evolution under {\em Mean curvature flow}  studies a  family of  immersions 
$F(\cdot, t)  : M^n \to \R^{n+1}$, $t \in (0,T)$,    of n-dimensional  hypersurfaces in $\R^{n+1}$ such that  
\be\label{eqn-MCF} 
\frac{\partial}{\partial t} F(p,t)  =  H(p,t)  \, \nu(p,t), \qquad p \in M^n
\ee
where $H(p, t)$ and  $\nu(p, t)$ denote the Mean curvature and  inward  pointing normal of the surface
$M_t:= F(M^n, t)$ at  the point $F(p,t)$. 

\smallskip

We will assume in this work  that  $M_t, t \in (0,T]$ is a {\em complete non-compact graph }  over a domain  $\Omega_t  \subset \R^n$ 
(if $\partial \Omega_0 \neq \emptyset$ then  $\partial \Omega_t$ will evolve by MCF, that is  in general  it will change in time).  
Then, the solution $M_t$ can be written as   $M_t=\{ (x,u(x,t))\, : \,x\in \Omega_t \}$ for a height function $u(x,t)$. In the  case where $\Omega=\R^n$ we will  
say that $M_t$ is an {\em entire graph}. 

\smallskip 
The {\em height function} 
$u$ satisfies 
 the following  the quasilinear parabolic initial value problem
 \begin{equation}\label{eqn-IVPD} 
\begin{cases}
u_t = \Big (\delta^{ij}-\frac{D^i u D^j u}{1+|Du|^2}\Big ) D_{ij}u, \qquad  &(x,t)\in \Omega_t  \times (0,T]\\
u(x,0)=u_0(x), \qquad &x \in \Omega_0
\end{cases}
\end{equation} 
where $M_0 := \{ (x,u_0(x))\, : \,x\in \Omega_0\}.$ 
Here we sum over repeated indices. In what follows, we will refer to this equation as {\it graphical mean curvature flow.}

\smallskip

Although the  Mean curvature flow (MCF)  has been  extensively studied in the compact case   from many points of view  (such as existence and regularity, weak solutions, singularities, the extension of the flow through the singularities, flow with surgery) not much has been done in the non-compact case
beyond  the  fundamental works by   Ecker-Huisken  \cite{EH1, EH2} which deal with graphs over $\R^n$ and the more recent work by the second author and Schn\"urer \cite{SS} which deals  with graphs over domains. 

\smallskip

The  works   by Ecker-Huisken \cite{EH1, EH2}  establish the existence and local a'priori estimates of the graphical MCF  over $\R^n$. 
Also, in \cite{EH1} the uniqueness  of graphical solutions is addressed in some special cases. 
The  results in  \cite{EH2} show  that in some sense the MCF on   entire graphs
behaves better than the  heat equation on $\R^n$, namely {\em an entire graph  
solution exists for  all times,  independently from the growth of the initial surface at infinity}. 
The initial entire graph is assumed to be  locally Lipschitz.  
 Methods of similar spirit as in \cite{EH2} are used by the second author  and  Schn\"urer in  \cite{SS} to establish the existence of  
MCF solutions which are complete non-compact graphs 
over domains $\Omega_t \subset \R^n$. Note that if  $\partial \Omega_0 \neq \emptyset$ then  $\partial \Omega_t$ will evolve by MCF, that is in general  it will change in time. 

\smallskip 

While the works  \cite{EH1, EH2}  and \cite{SS} completely   address the existence of classical solutions to the graphical MCF 
with Lipschitz continuous initial data (on $\R^n$ or domains),  the uniqueness question in such generality has remained on open question. 
While the methods in \cite{EH1, EH2} imply that polynomial growth at infinity is preserved by the flow, the question of uniqueness is not addressed in those works.
  In \cite{CY} the authors address uniqueness of graphs in general ambient manifolds and high co-dimension. However, their result requires a uniform bound on the second fundamental form for all times. 
Our goal in this work is to address the {\em uniqueness of classical  solutions to }  \eqref{eqn-IVP} under  {\em minimal  assumptions}  on  the behavior of the initial data 
$u_0(x)$  as $|x| \to +\infty$,  and under {\em no assumptions} on the behavior of the solutions at infinity.

\smallskip 

We will first describe our results in the case of {\em entire graphs},
these are Theorems \ref{thm-CSF}-\ref{thm-general}. We will then state our  result in the case of domains. 

\smallskip 
For the reader's convenience let us state the following  {\em  existence result } for graphical  MCF over $\R^n$ that follows from the Ecker-Huisken works  \cite{EH1, EH2}: {\em 
Assume that  $u_0\colon \R^n \to\R$ is 
  a locally Lipschitz continuous function.  
 Then there exists a solution $u \colon  \R^n \times (0, \infty) \to \R$ of the initial value problem
   \begin{equation}\label{eqn-IVP} 
\begin{cases}
u_t = \Big (\delta^{ij}-\frac{D^i u D^j u}{1+|Du|^2}\Big ) D_{ij}u, \qquad  &(x,t)\in \R^n \times (0,T)\\
u(x,0)=u_0(x), \qquad &x \in \R^n
\end{cases}
\end{equation}  \eqref{eqn-IVP} 
with $T=+\infty$ which  is continuous up to $t=0$ and $C^\infty$-smooth for
  $t>0$.}

\smallskip

The striking feature of the result above is that existence holds for {\em any locally Lipschitz entire graph  initial data } that is {\em independently from the spatial growth of the
initial data $u_0(x)$, as $|x| \to +\infty$}. This is in contrast with the {\em heat equation} on $\R^n$ where existence is guaranteed only for initial data with at most quadratic exponential growth at infinity. The underlying reason for this difference is that the diffusion coefficient    $g^{ij} = \delta^{ij}-\frac{D^i u D^j u}{1+|Du|^2}$ in this
non-linear problem becomes small in a maximal direction  of the gradient where  $|Du| \to +\infty$. This behavior can simply be observed in the one-dimensional case of an entire graph 
$u \colon  \R \times (0, \infty) \to \R$ evolving by curve shortening  flow (CSF), where $u(x,t)$ satisfies the equation 
\be\label{eqn-CSF}
u_t = \frac{u_{xx}}{1+u_x^2}
\ee
or in  higher dimensions under rotational symmetry where $x_{n+1} = u(r,t)$, $r=|x|$ evolves by
\be\label{eqn-uradial}
u_t = \frac{u_{rr}}{1+ u_r^2} + \frac{n-1}r \, u_r.
\ee

Note that a  similar phenomenon has been observed for quasilinear equations of the form 
\be\label{eqn-pm} u_t = \Delta u^m, \qquad \mbox{on} \,\, \R^n \times (0, \infty)
\ee
in the range of exponents $ \frac{(n-2)_+}n < m < 1$ (see in \cite{DKb, HP} and the references therein).  In all  cases above the {\em slow diffusion}   at spatial infinity  when $|Du|  \to +\infty$ in \eqref{eqn-CSF} and \eqref{eqn-uradial},  or $u \to +\infty$ 
\eqref{eqn-pm} prevents instant blow up of  solutions with large growing  initial data as $|x| \to +\infty$. 

\smallskip  
 We will see that in the {\em one-dimensional}  case 
of the CSF (equation \eqref{eqn-CSF})  or the {\em rotationally symmetric} case of MCF (equation \eqref{eqn-uradial})
{\em uniqueness holds for any  entire graph solution  independently of its growth at infinity. } This is in sharp contrast with the {\em heat equation} in any dimension. 
More precisely we will show the following two results. The first shows the uniqueness of entire graph solutions to CSF:

\begin{thm}[Uniqueness of solutions to CSF]  \label{thm-CSF}
Let $u_1, u_2 \colon \R  \times (0, T] \to \R$, $T >0$ be   two smooth   solutions   of equation \eqref{eqn-CSF}  with the same 
Lipschitz continuous initial data $u_0$, that is   $\lim_{t \to 0} u_1(\cdot, t) = \lim_{t \to 0} u_2(\cdot, t)=u_0$. Then,  $u_1=u_2$ on $\R \times (0,T]$. 
 \end{thm}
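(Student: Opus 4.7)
The plan is to exploit the divergence structure of CSF in one dimension: $u_t = \pp_x(\arctan u_x)$, whose flux $\arctan u_x$ is automatically bounded by $\pi/2$ regardless of the data. This will let me bound the difference $w := u_1 - u_2$ pointwise on all of $\R\times[0,T]$, an estimate with no analogue for the linear heat equation, after which a weighted-energy argument closes the proof.

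First, I would establish a uniform Lipschitz bound on the two solutions: letting $L$ denote the Lipschitz constant of $u_0$, the Ecker--Huisken gradient estimate (equivalently, the maximum principle applied to $p := u_{ix}$, which solves the quasilinear divergence-form equation $p_t = \pp_{xx}\arctan p$) yields $|u_{ix}(x,t)|\le L$ on $\R\times(0,T]$ for $i=1,2$. In particular $w$ is uniformly Lipschitz in $x$ with $|w(x,t)-w(y,t)|\le 2L|x-y|$. Moreover, by the mean value theorem,
\bee
w_t \;=\; \pp_x\bigl(\arctan u_{1x}-\arctan u_{2x}\bigr)\;=\;\pp_x\bigl(a(x,t)\,w_x\bigr),
\eee
where $a(x,t)=\int_0^1 \bigl(1+(su_{1x}+(1-s)u_{2x})^2\bigr)^{-1}\,ds \in (0,1]$; the essential feature for what follows is the upper bound $a\le 1$.

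Next, I would produce a global pointwise bound on $w$. Integrating the identity $w_t = (\arctan u_{1x}-\arctan u_{2x})_x$ in $x$ over $[x_0-1,x_0+1]$ and using $|\arctan|\le \pi/2$ together with $w(\cdot,0)\equiv 0$ gives $\bigl|\int_{x_0-1}^{x_0+1}w(y,t)\,dy\bigr|\le 2\pi t$. Combining this with the Lipschitz estimate above via averaging produces the key bound
\bee
|w(x,t)|\,\le\, \pi t + 2L \qquad \text{for all } (x,t)\in\R\times[0,T].
\eee

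Finally, with $\phi_R$ a smooth cutoff satisfying $\phi_R\equiv 1$ on $[-R,R]$, $\phi_R\equiv 0$ outside $[-2R,2R]$, and $|\phi_R'|\le 2/R$, I would multiply $w_t=(aw_x)_x$ by $w\phi_R^2$, integrate by parts, and use $a\le 1$ together with AM--GM to obtain
\bee
\frac{d}{dt}\int w^2\phi_R^2\,dx \;\le\; 4\int w^2\phi_{R,x}^2\,dx \;\le\; \frac{C(L,T)}{R}.
\eee
Integrating in time from $0$ (where $w$ vanishes by the shared initial data) yields $\int_{-R}^{R} w^2(x,t)\,dx\le C(L,T)\,t/R\to 0$ as $R\to\infty$, so $w\equiv 0$. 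The main obstacle is the third paragraph's global pointwise bound on $w$: it plays the role that growth hypotheses play in classical Widder-- or Tikhonov--type uniqueness theorems, and its availability rests entirely on the nonlinear ``bounded flux'' structure that distinguishes CSF from the heat equation.
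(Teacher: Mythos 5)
Your proof is correct in outline and it takes a genuinely different route from the paper's. The paper runs the Herrero--Pierre $L^1$-contraction argument on the derivative $v := u_x$: writing $v_t=(\arctan v)_{xx}$, applying Kato's inequality to $(v_1-v_2)_+$, and exploiting the H\"older-type estimate of Lemma \ref{lemma-ineq}, $(\arctan v_1-\arctan v_2)_+\le 2\,(v_1-v_2)_+^\gamma$, to close an ODE inequality for $I(t)=\int(v_1-v_2)_+\varphi\,dx$ with a $\gamma$-adapted cutoff. You instead work with $w=u_1-u_2$ directly, use the bounded flux $|\arctan u_x|\le\pi/2$ to get the global pointwise bound $|w|\le\pi t+2L$, and then run a plain $L^2$ energy estimate against a cutoff. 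What your version buys is elementarity and a nice explicit a~priori height-difference bound; what the paper's version buys is that it needs no gradient estimate on the solutions at all (the ``fast diffusion'' smallness of the coefficient $a$ does all the work), and it is precisely this feature that lets the same argument go through in the rotationally symmetric $n$-dimensional case (Theorem \ref{thm-RS}), where your averaging-over-an-interval step and uniform Lipschitz bound would be harder to reproduce.

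Two points that you should tighten. First, the uniform bound $|u_{ix}|\le L$ on $\R\times(0,T]$ is correct but is \emph{not} a consequence of ``the maximum principle applied to $p=u_{ix}$'' on a non-compact domain --- without a growth hypothesis at infinity the bare maximum principle is inapplicable there; it does follow from the localized interior gradient estimate of Ecker--Huisken \cite{EH2} (the cutoff $(R^2-|\x|^2-2nt)_+$ in the ambient ball makes the maximum principle legitimate, and sending $R\to\infty$ gives the global bound since $\sup_{M_0}v<\infty$). You should cite that explicitly rather than the heuristic you gave. Second, since the solutions are only smooth on $(0,T]$ and are merely asserted to converge to $u_0$ as $t\to0$ (uniformly on compacts), both the time-integration in your flux bound and the ``integrating in time from $0$'' step of the energy estimate should be carried out on $[\e,t]$ and then $\e\to0$ taken, using $\sup_K|w(\cdot,\e)|\to0$; this is routine but worth saying, and note that the paper's proof faces the analogous issue at the level of $v_i$, where it is in fact slightly more delicate (the convergence $u_{ix}(\cdot,t)\to u_{0x}$ a.e.\ is weaker than the convergence $u_i(\cdot,t)\to u_0$ you use).
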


The second, shows the uniqueness of rotationally symmetric entire graph solutions of MCF:

\begin{thm} [Uniqueness of rotationally symmetric MCF solutions] \label{thm-RS}
Let $u_1, u_2 \colon \R^n \times (0, T] \to \R$, $T >0$  be  two entire graph rotationally symmetric smooth solutions   of \eqref{eqn-IVP}  with the same 
Lipschitz continuous initial data $u_0(x)$, that is   $\lim_{t \to 0} u_1(\cdot, t) = \lim_{t \to 0} u_2(\cdot, t)=u_0$. Then,  $u_1=u_2$ on $\R^n \times (0,T]$.  \end{thm}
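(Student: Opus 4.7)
The plan is to reduce the theorem, via rotational symmetry, to a one-dimensional problem and then argue in parallel with Theorem \ref{thm-CSF}. Writing $u_i(x,t) = U_i(|x|,t)$, the functions $U_i(r,t)$ solve the radial equation
\be
U_t \;=\; \frac{U_{rr}}{1+U_r^2} + \frac{n-1}{r}\,U_r,\qquad (r,t)\in (0,\infty)\times(0,T],
\ee
together with the smoothness condition $U_{i,r}(0,t)=0$ and the common locally Lipschitz initial datum $U_0$. Setting $w := U_1-U_2$, a direct subtraction yields a linear parabolic equation $w_t = a(r,t)\,w_{rr} + b(r,t)\,w_r$ with $w(\cdot,0)\equiv 0$ and $w_r(0,t)=0$, whose diffusion coefficient $a = 1/(1+U_{1,r}^2)$ is positive on compact sets in $r$ but may degenerate at spatial infinity.

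The strategy is a comparison argument on an exhausting sequence of truncated cylinders $[0,R]\times(0,T]$. On each such cylinder the linear parabolic maximum principle applies routinely, so the real content is to drive the oscillation of $w$ on the lateral boundary $\{r=R\}$ to zero as $R\to\infty$. The mechanism, adapted from the argument behind Theorem \ref{thm-CSF}, is the degeneracy of the graphical diffusion wherever $|U_r|$ is large — the coefficient $1/(1+U_r^2)$ vanishes — which is precisely what makes MCF of entire graphs well posed without growth restrictions. Concretely, I would construct rotationally symmetric sandwiching barriers in the exterior region $\{r\ge R\}$, built from explicit MCF solutions of revolution (e.g.\ evolving spheres or self-similarly expanding cones, whose profiles satisfy the above equation with known behavior), that trap both $U_1$ and $U_2$ and force the vertical spread $|w|(R,t)$ to zero as $R\to\infty$; the local Lipschitz control on $U_0$ near $r=R$ determines how tight a sandwich one can arrange. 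Passing to the limit $R\to\infty$ then gives $w\equiv 0$. The apparent singularity of the term $(n-1)U_r/r$ at $r=0$ causes no trouble: smoothness forces $U_r=O(r)$, and the Neumann condition $w_r(0,t)=0$ turns the origin into a regular boundary for the truncated problem.

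The main obstacle will be the lateral barrier estimate. Because no growth assumption is imposed on $U_0$, one cannot invoke a Tychonoff-type uniqueness theorem for the linear equation satisfied by $w$, nor can one directly compare $u_1$ with a vertical translate of $u_2$ (that is essentially the statement one is trying to prove). The rotational symmetry is what makes the obstacle tractable: it collapses the ambient comparison in $\R^{n+1}$ to a one-dimensional comparison of profile curves in the half-plane $\{r\ge 0\}$, where the slow-diffusion structure of MCF can be exploited through explicit 1-D barriers of revolution. Once the barrier construction is in hand, the combination of the interior maximum principle on $[0,R]\times(0,T]$ with the lateral estimate on $\{r=R\}$, followed by $R\to\infty$, yields the desired uniqueness.
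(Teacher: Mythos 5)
Your approach and the paper's are fundamentally different, and the crucial step in yours — the lateral barrier estimate — is left entirely to the imagination, and I don't see how to fill it.

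The paper does not work pointwise with $w=U_1-U_2$ on truncated cylinders at all. Instead it differentiates, setting $v_i=(u_i)_r$, so that $v_i$ solves $v_t=(\arctan v)_{rr}+(\tfrac{n-1}{r}v)_r$, and then runs the Herrero--Pierre integral ($L^1$) technique: by Kato's inequality $w:=(v_1-v_2)_+$ satisfies a distributional differential inequality with a coefficient $a=\frac{(\arctan v_1-\arctan v_2)_+}{(v_1-v_2)_+}$, and Lemma~\ref{lemma-ineq} gives the nonlinear bound $a\le 2\,w^{-1+\gamma}$ for every $\gamma\in(0,1)$. Testing against $\varphi_R(r,t)=\psi\!\left(\frac{r^2+2(n-1)t}{R^2}\right)$ and choosing $\gamma$ with $n(1-\gamma)<2$ yields, after a Gronwall argument, $\int_0^R(v_1-v_2)_+\,r^{n-1}dr\le C\,\bar t^{1/(1-\gamma)}R^{n-2/(1-\gamma)}\to 0$ as $R\to\infty$. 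No barrier, no lateral boundary control, no properness assumption is ever invoked; the slow-diffusion mechanism is captured entirely by the algebraic bound on $a$.

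The gap in your plan is exactly the step you flag as "the main obstacle." You need barriers in $\{r\ge R\}$ that (i) sandwich both $U_1$ and $U_2$ for all $t\in(0,T]$, and (ii) pinch together at $r=R$ as $R\to\infty$. But the initial datum $u_0$ is only locally Lipschitz with no global structure; its local Lipschitz constant near $r=R$ may blow up arbitrarily fast with $R$, and the only thing both $U_i$ agree on at $t=0$ is the initial datum itself. The explicit rotationally symmetric MCF solutions you suggest (spheres, self-similar cones, bowls) have bounded gradients and rigid asymptotics, so they cannot hug a wildly oscillating $U_0$ tightly enough to produce a vanishing gap at $r=R$; and constructing barriers by perturbing $u_2$ itself is exactly the circularity you correctly warn against. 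Moreover, slow diffusion at large $|U_r|$ does not by itself yield a pointwise boundary-decoupling estimate — in the degenerate regime the equation is transport-like, and the maximum of $w$ need not decay inward; the paper's integral estimate is the device that turns the degeneracy into a usable bound. Unless you can supply a concrete barrier family whose gap at $r=R$ tends to $0$ uniformly in $t$ without any growth or gradient assumption on $u_0$, the reduction to the interior maximum principle does not close, and the theorem is not proved by this route.
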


\smallskip 

We remark that Theorem \ref{thm-CSF}  is already covered by the results in \cite{CZ0}. However, we provide here a simpler and more direct proof in the case of entire one-dimensional graphs, in particular pointing out the similarity with fast-diffusion. 

\smallskip 

Regarding the {\em general case } of proper entire graphs,  we  establish the uniqueness  under a suitable lower bound on the second fundamental  form  
which prevents large oscillations of the  solution 
in different directions. We then extend this result to proper graphs over subdomains $\Omega \subset \R^n$. We begin by recalling the following definition.

\smallskip 

\begin{defn}[Proper graphs  over   subdomains $\Omega \subset \R^n$] \label{defn-proper} A graph $M:= \{ (x, u(x)) : x \in \Omega \}$ over a subdomain $\Omega \subset \R^n$
 defined by the height function  $u: \Omega \to \R$  is called proper if $u(x) \to +\infty$ as $x \to \partial  \Omega$ or $|x| \to +\infty$ $($the latter is assumed if  $\Omega$ is unbounded, in particular 
 when  $\Omega=\R^n )$. 
\end{defn}

\smallskip 

Let  $M_t =
\{ (x, u(x, t)): \, x \in \R^n \} $, $t \in (0,T)$   be  a proper entire graph solution to  mean curvature flow   \eqref{eqn-MCF} starting at $M_0$, 
which is  defined by the height function $u \colon \R^n \times (0,T) \to \R$. 
We denote  by  $v = \langle e_{n+1}, \nu \rangle^{-1}$  the {\em gradient function}
of  $M_t$, where $\nu$ denotes   the inward pointing unit normal on $M_t$. 
Since  $M_t$, $t \in (0,T)$ is assumed to be an  entire graph, 
$\langle e_{n+1}, \nu \rangle$ has always the same sign. Furthermore, our  assumption that $M_t$ is proper,   guarantees that 
\be v = \langle e_{n+1}, \nu \rangle^{-1} >0, \qquad \mbox{on} \,\, M_t, \,\, t \in [0, T]
\ee
in which case $v = \sqrt{1+|Du|}$.  In our result below we will further  assume that $M_t$ satisfies the lower bound  curvature condition  
\be\label{eqn-cond-curv}
v \, h^i_j  \geq -c \,  \delta_j^i,  \qquad \mbox{on} \,\, M_t, \, t \in (0,T]
\ee
for some uniform constant $c >0$.  Here $h^i_j $ is the second fundamental form and in the particular case of  graphs corresponds to $h^i_j=  \Big (\delta^{il}-\frac{D_i u D_l u}{1+|Du|^2}\Big ) \frac{D_{lj}u}
{\sqrt{1+|Du|^2}}.$ 

\smallskip 

Our uniqueness  result states as follows:

\begin{thm}[General uniqueness result for entire graphs] \label{thm-general} Assume that $u_0: \R^n \to \R$ is a locally Lipschitz function defining a proper  entire graph  $M_0=
\{ (x, u_0(x)): \, x \in \R^n \} \subset \R^{n+1}$.

Let $u_1, u_2 \colon \R^n \times (0,T] \to \R$ be two  smooth  solutions of \eqref{eqn-IVP} defining two  entire graph solutions $M_t^1 =
\{ (x, u_1(x, t)): \, x \in \R^n \} $ and $M_t^2 =\{ (x, u_2(x, t)): \, x \in \R^n \} $ of MCF \eqref{eqn-MCF} which both satisfy condition \eqref{eqn-cond-curv} and have  the same initial data $u_0$, that is  $\lim_{t \to 0} u_1(\cdot, t) =  \lim_{t \to 0} u_2(\cdot, t) =u_0$. Then, $u_1=u_2$ on $\R^n \times (0,T]$,
that is $M_t^1 = M_t^2$ for all $t \in (0,T]$. 
\end{thm}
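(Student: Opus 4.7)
The plan is to prove the theorem by a comparison argument on the difference $w := u_1 - u_2$, which vanishes at $t=0$. Writing \eqref{eqn-IVP} as $u_t = g^{ij}(Du)\, D_{ij} u$ and subtracting, $w$ satisfies a linear parabolic equation
\[ w_t \;=\; a^{ij}(x,t)\, D_{ij} w + b^i(x,t)\, D_i w, \qquad w(\cdot,0) = 0, \]
where $a^{ij} := \int_0^1 g^{ij}\bigl(\tau\, Du_1 + (1-\tau)\, Du_2\bigr)\, d\tau$ is a convex combination of the two degenerate metric inverses and $b^i$ is a corresponding combination of $D^2 u_1, D^2 u_2$ with the gradients. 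The matrix $(a^{ij})$ is positive definite but in general only locally uniformly elliptic, so the classical maximum principle does not apply globally. By symmetry of the hypotheses it suffices to prove $w \leq 0$, and the main obstacle is to rule out growth of $w$ at spatial infinity; here the curvature condition \eqref{eqn-cond-curv} will enter decisively.

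The hypothesis $v\, h^i_j \geq -c\, \delta^i_j$ is equivalent, for graphs, to the twisted semiconvexity $D^2 u_i \geq -c\bigl(I + Du_i \otimes Du_i\bigr)$ for each solution. Contracting in $i=j$ gives $vH \geq -nc$, and since $u_t = vH$ we obtain the pointwise lower bound $u_i(x,t) \geq u_0(x) - nct$ on $[0,T]$. Combining this Hessian lower bound with the Ecker--Huisken interior gradient and higher-derivative estimates controls $|Du_i|$ and $|A_i|$ in balls $B_R$ in terms of $R$, $T$, and the local Lipschitz norm of $u_0$ on a slightly larger ball. A more delicate argument exploiting the preservation of \eqref{eqn-cond-curv} along the flow is then used to promote these local bounds into a quantitative upper estimate on $u_i(x,t)$ with controlled growth in $|x|$ over $[0,T]$; this global-in-space growth control is the ingredient needed for the barrier in the final step.

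With such quantitative growth bounds in hand, I would construct a supersolution barrier of the form $\phi(x,t) = e^{\lambda t}\,\psi(|x|)$, where $\psi$ grows slightly faster than the established upper bound on $u_i(x,t)$ and $\lambda$ is chosen large enough so that, using $0 \leq (a^{ij}) \leq I$ and the controlled behavior of $b^i$ near infinity, $\phi_t - a^{ij} D_{ij} \phi - b^i D_i \phi \geq 0$. Then $w - \epsilon\phi \to -\infty$ as $|x| \to \infty$ for every $\epsilon > 0$, so the supremum of $w - \epsilon\phi$ on $\R^n \times [0,T]$ is attained at an interior point, and the maximum principle for the linear equation places this supremum at $t=0$, where $w \equiv 0$. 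Hence $w \leq \epsilon\phi$ on $\R^n \times [0,T]$, and letting $\epsilon \to 0$ yields $u_1 \leq u_2$; the swapped argument yields equality. The principal technical difficulty lies in the middle step: translating the one-sided curvature hypothesis into sharp enough growth control on the height $u_i(x,t)$ itself (rather than merely on its velocity $u_t$), and calibrating this growth against the degeneration of $a^{ij}$ where $|Du_i|$ is large, so that the barrier can be chosen uniformly on $[0,T]$. It may be necessary to first carry out the argument on a short time interval $[0,\tau]$ with $\tau$ depending on $c$ and then iterate.
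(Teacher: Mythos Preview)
Your outline has a genuine gap at the barrier step, and it is exactly the step where the paper's proof differs essentially from yours.

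You propose a barrier $\phi(x,t)=e^{\lambda t}\psi(|x|)$ with $\psi$ growing ``slightly faster'' than the solutions. But the whole point of the theorem is that \emph{no} growth rate is assumed on $u_0$: it may grow like $e^{e^{|x|}}$, or faster still. Your lower bound $u_i(x,t)\ge u_0(x)-nct$ (which is correct) then forces $u_i(\cdot,t)$ to grow at least that fast, so there is no function $\psi(|x|)$ you can write down in advance that dominates $u_i$. The ``quantitative upper estimate on $u_i(x,t)$ with controlled growth in $|x|$'' that you hope to extract from \eqref{eqn-cond-curv} simply does not exist in this generality; the curvature hypothesis gives you a one-sided Hessian bound, not control on the height. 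Relatedly, in your linearized equation the drift $b^i$ involves $D^2u_1,D^2u_2$, and these are not bounded at infinity without growth assumptions, so the inequality $\phi_t-a^{ij}D_{ij}\phi-b^iD_i\phi\ge0$ cannot be verified globally.

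The paper sidesteps both difficulties with one idea: it takes the barrier to be built from the solution itself, setting $\zeta=\epsilon(t+\epsilon)\,u_1^2$ and $W=u_1-u_2-\zeta$. Properness of $u_1$ (propagated from $u_0$ by a local argument) forces $W\to-\infty$ at spatial infinity regardless of how fast $u_0$ grows, so $\sup W$ is attained at an interior point. The paper also does \emph{not} linearize via the integral average; it writes $w_t-a_{ij}D_{ij}w=(a_{ij}-\bar a_{ij})D_{ij}\bar u$ with $a_{ij}=g^{ij}(Du_1)$, $\bar a_{ij}=g^{ij}(Du_2)$. At the interior maximum the condition $DW=0$ gives $D\bar u=(1-2\epsilon(t_0+\epsilon)u)\,Du$, which makes $(a_{ij}-\bar a_{ij})$ explicitly computable and reduces the right-hand side to a single term of the form $\dfrac{D_{ij}\bar u\,D_iu\,D_ju}{(1+|Du|^2)(1+|D\bar u|^2)}$. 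Only then is \eqref{eqn-cond-curv} invoked, yielding $\dfrac{D_{ij}\bar u\,D_iu\,D_ju}{1+|D\bar u|^2}\ge -c|Du|^2$, and a short algebraic contradiction on a time interval of length $\sim 1/c$, after which one iterates. Your intuition that a short-time step with $\tau\sim 1/c$ is needed is correct, but the mechanism that makes it work is the self-referential barrier, not an external $\psi$.
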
  

\smallskip

\begin{remark} \begin{enumerate}[i)] 
\item Theorem \ref{thm-general}  implies that uniqueness holds under convexity with no other growth conditions on the initial data (see in the last section \ref{sec-convex}). As a   consequence  Hamilton's differential Harnack inequality holds  for convex graphs evolving under Mean Curvature Flow (see Corollary \ref{cor:Harnack estimate} in Section \ref{sec-convex}). A related result was recently discussed in \cite{BLL} in the context of translating solutions.

\item Theorem \ref{thm-general} shows  that uniqueness  holds for initial data $u_0(x)$  which has  arbitrarily large  growth as $|x| \to +\infty$,  as long as  the lower curvature bound \eqref{eqn-cond-curv} holds.

\item Theorem \ref{thm-general} only assumes the  lower bound \eqref{eqn-cond-curv} in comparison  with the results in \cite{CY} which assume upper and lower bounds on the second fundamental form. 

\end{enumerate} 
\end{remark} 

\smallskip 

At last we will discuss the uniqueness for  {\em graphs over subdomains}  of $\mathbb{R}^n$. In that context, the result in \cite{SS} guarantees the existence of smooth solutions:
 {\em Let $\Omega_0 \subset\R^{n+1}$ be a bounded open set and $u_0\colon \Omega_0 \to\R$
  a locally Lipschitz continuous function with $u_0(x)\to\infty$ for
  $x\to x_0\in\partial \Omega_0$.  Then there exists $(\cD,u)$, where
  $\cD \subset\R^{n+1}\times[0,\infty)$ is relatively open, such
  that $u$ is a solutions of the  graphical mean curvature flow
  \begin{equation}\label{eqn-IVP-B} 
\begin{cases}
u_t = \Big (\delta^{ij}-\frac{D_i u D_j u}{1+|Du|^2}\Big ) D_{ij}u, \qquad  &(x,t)\in \cD \setminus(\Omega_0\times\{0\})\\
u(x,0)=u_0(x), \qquad &x \in \Omega_0
\end{cases}
\end{equation} 
 The function $u$ is smooth for
  $t>0$ and continuous up to $t=0$, $u(\cdot,0)=u_0$ in
  $\Omega_0$ and $u(x,t)\to\infty$ as $(x,t)\to\partial\cD$, where
  $\partial\cD$ is the relative boundary of $\cD$ in
  $\R^{n+1}\times[0,\infty)$.}\\
  
   It is relevant to remark that in this theorem the domain of definition for the function $u$ changes in time and it is given by the  mean curvature flow 
   evolution of $\partial \Omega_0$ (see the discussion in \cite{SS}). More precisely, $u(x,t)$ is a graph over  $ \Omega_t$ where $\Omega_t \times \{t\} =\cD \cap \big (\R^{n}\times \{t\} \big ) $  and $\partial \Omega_t$ agrees with the evolution by mean curvature flow  of $\partial \Omega_0$ at time $t$, provided that this evolution is smooth. In addition, it is possible to see from the proof in \cite{SS} that if $(x_k, t_k)\to (\bar{x}, \bar{t})\in \partial\cD$ and $|\bar{x}|\leq R$ for some $R>0$,  then $u(x_k, t_k)\to\infty$.
\smallskip    

\smallskip 

Our  uniqueness result for graphs over subdomains states as follows: 
   
\begin{thm}[General uniqueness result for subdomains] \label{thm-general-sub}

 Let $\Omega_0  \subset\R^{n}$ be an open  set such that $\partial \Omega_0$ has a unique smooth evolution by  Mean Curvature Flow in $(0,T]$ and let $\Omega_t$ be such that $\partial \Omega_t$ agrees with the evolution of  $\partial \Omega_0$  at time $t$. 
Assume that $u_0: \Omega_0  \to\mathbb{R}$ is a locally Lipschitz function defining a proper graph   $M_0=
\{ (x, u_0(x)): \, x \in \Omega_0 \} \subset\mathbb{R}^{n+1}$. 

Let $u_1, u_2 \colon \Omega_t \times (0,T] \to \R$ be two  smooth solutions of \eqref{eqn-IVP-B} defining two proper entire graph solutions $M_t^1 =
\{ (x, u_1(x, t)): \, x \in \Omega_t \} $ and $M_t^2 =\{ (x, u_2(x, t)): \, x \in \Omega_t  \} $ of MCF \eqref{eqn-MCF}, both  satisfying condition \eqref{eqn-cond-curv},  and having  the same initial data $u_0$,  that is  $\lim_{t \to 0} u_1(\cdot, t) =  \lim_{t \to 0} u_2(\cdot, t) =u_0$. Assume in addition that if
 $(x_k, t_k)\to (\bar{x}, \bar{t})\in \partial\cD$ and $|\bar{x}|\leq R$ for some $R>0$ then $u_i(x_k, t_k)\to\infty$.
  Then, $u_1=u_2$ on $\cD=\cup_{t\in[0,T]} \Omega_t\times\{t\}$,
that is $M_t^1 = M_t^2$ for all $t \in (0,T]$. 
\end{thm}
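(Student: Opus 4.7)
The plan is to follow the strategy of Theorem \ref{thm-general}, with the properness of the graphs at the lateral boundary playing the role that the entire-graph assumption played in that theorem. As in the entire case, the argument will be driven by a comparison principle applied to the difference $w := u_1 - u_2$, combined with the control on the gradient function $v_i = \sqrt{1 + |D u_i|^2}$ that the curvature condition \eqref{eqn-cond-curv} provides through Ecker--Huisken-type interior estimates.

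Concretely, for every $M > 0$ I would introduce the spacetime sublevel set
\[
\cD_M := \{(x,t) \in \cD : u_1(x,t) < M \ \text{and} \ u_2(x,t) < M\},
\]
and observe that the additional hypothesis $u_i(x_k, t_k) \to \infty$ whenever $(x_k, t_k) \to (\bar x, \bar t) \in \partial \cD$ with $|\bar x| \leq R$ forces each time-slice of $\cD_M$ to lie in a compact subset of $\Omega_t$ with positive distance to $\partial \Omega_t$. On such a region, the interior curvature and gradient estimates of Ecker--Huisken, together with \eqref{eqn-cond-curv}, should give a uniform bound on $v_i$, and hence a uniform parabolicity constant for the linear equation satisfied by $w$. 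The next step is to apply the maximum-principle/barrier argument from the proof of Theorem \ref{thm-general} on $\cD_M$: $w$ vanishes on the initial slice, is bounded by $M$ on $\{u_i = M\}$, and satisfies a uniformly parabolic linear equation on $\cD_M$, so $|w|$ can be compared against a suitable supersolution. Sending $M \to \infty$, the supersolution should decay to zero on every fixed compact subset of $\cD$, yielding $w \equiv 0$ and hence $M_t^1 = M_t^2$.

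The main obstacle will be ensuring that the supersolution constructed in Theorem \ref{thm-general}, originally designed to absorb boundary data at spatial infinity, can be adapted to the lateral boundary $\partial \cD$ of the subdomain setting. One must produce a barrier on $\cD_M$ which (i) respects the geometry dictated by the curvature bound \eqref{eqn-cond-curv}, (ii) dominates $|w|$ on the level-set boundary $\{u_i = M\}$ uniformly in how the two graphs approach $\partial \Omega_t$, and (iii) vanishes pointwise as $M \to \infty$. Provided the construction in Theorem \ref{thm-general} is sufficiently local --- which it should be, since the curvature condition is pointwise and the barrier there is built on coordinate cylinders --- this adaptation will go through, and the rest of the proof will reduce to a careful bookkeeping of the parabolic boundary of $\cD_M$.
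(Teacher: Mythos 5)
Your proposal takes a genuinely different route from the paper's, and as sketched it has a gap at the crucial step. The paper's proof of Theorem~\ref{thm-general-sub} is essentially a one-line reduction: it uses \emph{the same} penalized quantity $W = u_1 - u_2 - \e(t+\e)u_1^2$ as in Theorem~\ref{thm-general}, sets $m^* = \sup_{\cD} W$, and observes that the added hypothesis (together with the sub-domain extensions of Lemma~\ref{lemma-llb} and Corollary~\ref{cor-proper}) forces $u_1 \to +\infty$ and hence $W \to -\infty$ along any sequence approaching $\partial \cD$ or going to spatial infinity. Therefore the supremum, if positive, is attained at an interior point of $\cD$, and the pointwise computation from the proof of Theorem~\ref{thm-general} then runs verbatim. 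No exhaustion by sublevel sets, no uniform parabolicity constant, and no barrier depending on a level $M$ are needed.

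Your proposal, by contrast, passes to the sublevel sets $\cD_M$ and hopes to compare $|w|$ against a supersolution that equals $M$ on $\{u_i = M\}$ but vanishes on compacts as $M \to \infty$. This is where the gap lies: you never construct such a supersolution, and its existence is far from clear since $|w|$ need not be small on $\{u_i = M\}$, and the level sets $\{u_i = M\}$ can approach a fixed compact set as the gradient of $u_i$ blows up near $\partial\Omega_t$. More importantly, you misread the role of the curvature condition \eqref{eqn-cond-curv}. You suggest it, together with Ecker--Huisken, ``should give a uniform bound on $v_i$'' and hence a parabolicity constant; but \eqref{eqn-cond-curv} is only a \emph{lower} bound on $v\,h^i_j$ and does not control the gradient. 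In the paper, \eqref{eqn-cond-curv} is used at the interior maximum of $W$: there $DW = 0$ gives $(1 - 2\e(t_0+\e)u)\,Du = D\bar u$, from which one computes $(a_{ij} - \bar a_{ij})D_{ij}\bar u$ explicitly, and the lower curvature bound turns into the one-sided estimate \eqref{eqn-lowerb}, which when combined with the $-\frac{\e}{2}u^2$ term produces a contradiction for $t_0 < \frac{1}{10c}$. Your description of the paper's supersolution as ``designed to absorb boundary data at spatial infinity'' is therefore a mischaracterization: $\zeta = \e(t+\e)u^2$ is a penalization that confines any positive maximum of $W$ to the region $\{u \lesssim \e^{-2}\}$, enabling the pointwise curvature argument, rather than a barrier that absorbs large lateral boundary data. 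Without importing that mechanism, your exhaustion-by-level-sets scheme does not close.
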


The organization of this paper is as follows: In Sections \ref{sec-CSF} and \ref{sec-RC} we give  the proofs of Theorems \ref{thm-CSF} and \ref{thm-RS}, respectively. Section \ref{sec-GC} 
is devoted to the proofs of Theorems \ref{thm-general}   and \ref{thm-general-sub}. Finally, Section \ref{sec-convex} is devoted to the proof of Hamilton's differential Harnack inequality.\\

We conclude this  section with  the  following remarks. 

\begin{remark} 
In Theorem \ref{thm-general-sub},  if the evolution of  $\partial \Omega_0$ is not unique, it follows from the proof of the result that for each  evolution $\Omega_t$ there is at most one proper graphical solution satisfying assumption \eqref{eqn-cond-curv}. 
%
 \end{remark}

 \begin{remark}
 Uniqueness for other non-compact flows has been discussed in other works. For instance, 
uniqueness  results  for complete Ricci Flow  are discussed in \cite{CZ1} and  \cite{PT}.  The uniqueness for complete Yamabe flow in hyperbolic space is discussed in \cite{S}.
\end{remark}

 {\bf Acknowledgments.} We would like to thank S. Lynch and Jingze Zhu for their  helpful remarks. Also    M. Langford for bringing to our attention the question of a differential Harnack inequality  in this setting.

\section{Curve shortening flow - Theorem \ref{thm-CSF}} \label{sec-CSF}

In this section we will show that entire graph smooth solutions to Curve Shortening Flow (that is \eqref{eqn-IVP} for $n=1$ and $\Omega =\mathbb{R}$)
are unique without any growth assumptions at spatial infinity. This result is in contrast with the case of the heat equation where at most quadratic exponential growth 
at infinity is required for uniqueness. As mentioned in the introduction  Theorem \ref{thm-CSF}  is already covered by the results in \cite{CZ0}. We  provide here a simpler and more direct proof in the case of entire graphs.

The evolution of a curve  $y=u(x,t)$ on the plane is  given by ${\displaystyle  u_t = \frac{u_{xx}}{1+ u_x^2}}$ which can be also written in divergence form as 
\be\label{eqn-u1}
u_t = \big (\arctan (u_x) \big)_x .
\ee 
Differentiating in $x$ we see that  $v:=u_x$  satisfies the equation 
\be\label{eqn-v0}
v_t = \big (\arctan v  \big)_{xx}.
\ee 

The proof of Theorem \ref{thm-CSF} will be based on the following simple observation which we prove next.

\begin{lemma}\label{lemma-ineq}
For any $\gamma \in (0,1]$, the following holds 
$$(\arctan v_1 - \arctan v_2 )_+ \leq 2  \, ( v_1 - v_2)_+^\gamma, \qquad \forall v_1, v_2 \in [0, + \infty).$$
\end{lemma}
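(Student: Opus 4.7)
The plan is to reduce to the nontrivial case $v_1 \geq v_2 \geq 0$ (if $v_1 < v_2$ then $(\arctan v_1 - \arctan v_2)_+ = 0$ and there is nothing to prove, since also $(v_1-v_2)_+^\gamma = 0$ and the inequality is trivially satisfied). Under this assumption both positive parts coincide with the straight differences, so it suffices to prove
\[
\arctan v_1 - \arctan v_2 \;\leq\; 2\,(v_1-v_2)^{\gamma}
\]
whenever $v_1 \geq v_2 \geq 0$.

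I would then split into two regimes according to the size of $v_1-v_2$. In the \emph{small-increment regime} $0 \leq v_1-v_2 \leq 1$, the mean value theorem applied to $\arctan$ gives
\[
\arctan v_1 - \arctan v_2 \;=\; \frac{v_1-v_2}{1+\xi^{2}} \;\leq\; v_1-v_2
\]
for some $\xi \in [v_2,v_1]$. Since $0 \leq v_1-v_2 \leq 1$ and $\gamma \in (0,1]$, the elementary inequality $s \leq s^{\gamma}$ on $[0,1]$ yields $v_1-v_2 \leq (v_1-v_2)^{\gamma} \leq 2(v_1-v_2)^{\gamma}$, which is the desired bound.

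In the \emph{large-increment regime} $v_1 - v_2 > 1$, I would use the crude a priori bound on $\arctan$: since $v_1, v_2 \geq 0$, both $\arctan v_i$ lie in $[0,\pi/2)$, so
\[
\arctan v_1 - \arctan v_2 \;<\; \frac{\pi}{2} \;<\; 2,
\]
while $(v_1-v_2)^{\gamma} \geq 1$ for any $\gamma>0$ when $v_1-v_2 \geq 1$. Hence $\arctan v_1 - \arctan v_2 < 2 \leq 2(v_1-v_2)^{\gamma}$, completing this case.

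Combining the two regimes proves the lemma. There is no real obstacle here; the only subtle choice is the constant $2$, which is what makes the two regimes patch together uniformly in $\gamma \in (0,1]$: the small-increment case alone yields the sharper constant $1$, while the large-increment case forces any constant strictly greater than $\pi/2$, and $2$ is the convenient choice.
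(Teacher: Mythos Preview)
Your proof is correct, but the decomposition differs from the paper's. The paper splits into three cases according to the \emph{positions} of $v_1,v_2$ relative to $1$ (namely $v_1>v_2\geq 1$, $0<v_2<1<v_1$, and $0<v_2<v_1\leq 1$), using in the first two cases the integral representation $\arctan v_1-\arctan v_2=\int_{v_2}^{v_1}(1+s^2)^{-1}\,ds$ together with the comparison $(1+s^2)^{-1}\leq s^{-2}$ for $s\geq 1$. You instead split according to the \emph{size of the increment} $v_1-v_2$, using only the mean value theorem in the small-increment regime and the uniform bound $\arctan<\pi/2$ on $[0,\infty)$ in the large-increment regime. Your argument is shorter and avoids the somewhat delicate tracking of inequalities such as $v_1\geq (v_1-v_2)^{1-\gamma}$ that the paper needs; the paper's approach, on the other hand, never invokes the global bound on $\arctan$ and in principle could yield sharper constants in intermediate ranges. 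Either route is perfectly adequate for the application that follows.
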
 

\begin{proof}[Proof of Lemma \ref{lemma-ineq}] 
Fix a number $\gamma \in (0, 1]$. We may assume  that $v_1 > v_2$ and write
$$(\arctan v_1 - \arctan v_2 )_+ = \int_{v_2}^{v_1}  \frac 1{1+ s^2} \, ds.$$
Assume first that $v_1 > v_2 \geq 1$. In this case, for  any number $\gamma \in (0,1]$ we have $v_1 \geq (v_1-v_2)^{1-\gamma}$,
so that the above gives   
$$(\arctan v_1 - \arctan v_2 )_+ \leq  \int_{v_2}^{v_1}  \frac 1{s^2} \, ds =
\frac{v_1-v_2}{v_1v_2} \leq \frac{v_1-v_2}{v_1} \leq  \frac{v_1-v_2}{(v_1-v_2)^{1-\gamma}} \leq (v_1-v_2)_+^\gamma.$$
 In the case that $0 < v_2 < 1 < v_1$ we have 
 $$(\arctan v_1 - \arctan v_2 )_+  \leq \int_{v_2}^1 \, ds + \int_1^{v_1}  \frac 1{s^2} \, ds  
 \leq (1-v_2) + \frac{v_1-1}{v_1}  \leq 2\,  (v_1-v_2)_+^\gamma$$
 since for any $\gamma \in (0,1]$ we have $1-v_2 <  (1-v_2)^\gamma < (v_1-v_2)^\gamma $ and $\frac{v_1-1}{v_1} < \frac{v_1-v_2}{v_1} <  (v_1-v_2)^\gamma$.
 The last inequality follows from $v_1 > (v_1-v_2)^{1-\gamma}$ which holds in this case.  
Finally, for  $0 < v_2 < v_1 \leq 1$, we have 
$$(\arctan v_1 - \arctan v_2 )_+ \leq (v_1-v_2)_+ \leq (v_1-v_2)_+^{\gamma}.$$
\end{proof}

\smallskip

\begin{proof}[Proof of Theorem \ref{thm-CSF} ] 
The proof follows the method  by Herrero and Pierre in \cite{HP}. Let $v_1=u_{1x}$ and $v_2=u_{2x}$.  We will first show that $v_1 \equiv v_2$ on $\R  \times  [0,T)$. To this end, we set  $w=(v_1-v_2)_+$. 
Since $v_1, v_2$ satisfy equation \eqref{eqn-v},  Kato's inequality implies that $w$  satisfies the differential inequality 
\be\label{eqn-w0}
w_t \leq  (a w)_{xx}, \qquad  \mbox{on}\,\,  \R \times (0,T)
\ee 
in the sense of distributions, where 
\bee
a :=  \frac {(\arctan v_1 - \arctan v_2)_+}{(v_1-v_2)_+}.  
\eee
Our observation in Lemma \ref{lemma-ineq} shows that for any $\gamma \in (0,1)$ we have 
$$0 \leq a  \leq 2 \, w^{-1+\gamma}.$$
We will use that momentarily.

\smallskip

Consider the test function 
$\varphi(x)  = \psi ( \frac{x}{R} ) $
where  $\psi(\rho)$ is a  smooth cut-off function supported in  $(-2,2)$ such that $ 0 \leq \psi \leq 1$, $\psi(\rho)=1$ for $x \in [-1,1]$.  
Integrating the differential inequality \eqref{eqn-w0} against $\varphi$,   we obtain 
\be\label{eqn-ineq10}
\begin{split}
\frac {d}{dt}  \int w(\cdot, t)  &\, \varphi  \,  dx  \leq  \int ( a  w) (\cdot, t)   \,  \varphi''   \, dx, \qquad t \in (0,T).
\end{split} 
\ee
\smallskip 
For any number $\gamma \in (0,1)$ (to be fixed  at the end of our proof)  we  use   inequality  $0 \leq a \leq 2 \, w^{-1+\gamma}$
 to conclude 
\bee
\begin{split}
\frac {d}{dt}  \int w \, \varphi  \,  dx  &\leq  2 \,  \int w^\gamma \,  |\varphi '' | \,  dx \leq C  \, \Big ( \int w \varphi \,  dx \Big )^\gamma \Big ( \int  |\varphi''   |^{\frac1{1-\gamma}}  
\varphi^{-\frac \gamma {1-\gamma} }  \, dx  \Big )^{1-\gamma}.\\
\end{split}
\eee
Since $|\varphi'' (x)| \leq C R^{-2} |\psi''(\rho)|$, $x= R\, \rho$,   and $\psi $ is supported in the interval $[-2,2]$ we have 
$$\int  |\varphi'' |^{\frac1{1-\gamma}}   \varphi^{-\frac \gamma {1-\gamma} }  \, dx  \leq C R^{1- \frac 2{1-\gamma}}
\int    |\psi''  |^{\frac1{1-\gamma}}   \psi^{-\frac \gamma {1-\gamma} }  \, d\rho.$$
\smallskip
For any  $\gamma \in (0,1)$ we can choose  cutoff $\psi=\psi_\gamma$ such that $\int   |\psi''  |^{\frac1{1-\gamma}}   \psi^{-\frac \gamma {1-\gamma} }  \, d\rho \leq C_\gamma$. 
We conclude that  $I(t):=  \int w(\cdot, t)   \varphi  \,  d x$ satisfies 
$$I'(t) \leq C_\gamma  \, I(t)^\gamma \,   R^{-(1+\gamma)}.$$
Integrating the last inequality on $[0, \bar t]$ for any $\bar t \in (0, T)$ while using that $\lim_{t \to 0} I(t)=0$ (this follows from the fact that that $v_1(\cdot, 0)=v_2(\cdot,0)$
a.e.)  we  obtain
$$I(\bar t )^{1-\gamma}  \leq C_\gamma  \,  \bar t  \, R^{  - (1+\gamma)  }  \implies I(\bar t) \leq C_n \, \bar t^{\frac 1{1-\gamma}} \, R^{ - \frac {1+\gamma}{1-\gamma}}.$$
Finally recalling that $\varphi \equiv 1$ on $[-R, R]$, we get 
$$\int_{-R}^R (v_1-v_2)_+(x,t ) dx  \leq  C_n \, \bar t^{\frac 1{1-\gamma}} \, R^{ - \frac {1+\gamma}{1-\gamma}}.$$
Letting $R \to +\infty$  and using monotone convergence we conclude that
${\displaystyle \int_0^\infty (v_1-v_2)_+(x, \bar t )\,dx  =0}$, for all $t\in [0, T)$.  
Therefore,  conclude that $(v_1-v_2)_+  \equiv 0$ on $[0, \infty)  \times [0, t_0]$, i.e. $(u_1)_x(\cdot, t)  \leq (u_2)_x(\cdot, t) $  in $\R$. Similarly $(u_2)_x(\cdot, t)  \leq (u_1)_x
(\cdot, t)$   in $\R$
implying that for any $t \in [0,T)$, we have $(u_1)_x( \cdot,t)  =   (u_2)_x(\cdot,t) $  in $\R$. This and the fact that $u_1=u_2$ at time $t=0$ easily give us that 
$u_1 \equiv u_2$, finishing our proof. 
\end{proof}

\section{Rotationally symmetric solutions - Theorem \ref{thm-RS}}  \label{sec-RC}
In this section we will consider the uniqueness of  rotationally symmetric  solutions of the initial value problem \eqref{eqn-IVP} on $\R^n \times (0, T)$.  
On a  radial solution $u(r,t)$ the evolution equation in  \eqref{eqn-IVP} becomes \
\be\label{eqn-uradial2}
u_t = \frac{u_{rr}}{1+ u_r^2} + \frac{n-1}r \, u_r.
\ee 
 Differentiating \eqref{eqn-uradial2} with respect to $r$ we find that  the derivative  $v:=u_r$ of any solution $u$ of \eqref{eqn-uradial}
satisfies the equation 
\begin{equation}\label{eqn-v}
 v_t=(\arctan v)_{rr}+ \big (\frac{n-1}{r} v)_r. 
 \end{equation}
\smallskip

\begin{proof}[Proof of Theorem \ref{thm-RS} ] 
The proof follows the method    by Herrero and Pierre in \cite{HP} and is a generalization of the one-dimensional case with the necessary adaptations.  
We simply denote by  $u_1(r,t), u_2(r,t)$ the rotational symmetric profiles
 we let  $v_1=u_{1r}$ and $v_2=u_{2r}$.  Set $w=(v_1-v_2)_+$. Since, $v_1, v_2$ both satisfy \eqref{eqn-v}, Kato's inequality implies that $w:=(v_1-v_2)_+$ satisfies 
\be\label{eqn-w}
w_t \leq  \Delta (a w) -  \frac{n-1}{r} (a w)_r  +  \big (\frac {n-1}r  \,   w \big )_r
\ee 
in the sense of distributions, where 
\bee
a :=  \frac {(\arctan v_1 - \arctan v_2)_+}{(v_1-v_2)_+}.  
\eee
Similarly with the one-dimensional case, the crucial observation is that  for any $\gamma \in (0,1)$ we have 
$0 \leq a  \leq 2 \, w^{-1+\gamma}$.

\smallskip

Consider the test function 
$$\varphi_R(r,t)  = \psi \Big( \frac{r^2 + 2(n-1) t }{R^2} \Big) $$
where  $\psi(\rho)$ is a  smooth cut-off function defined on  $ [0, +\infty)$ such that $ 0 \leq \psi \leq 1$, $\psi(\rho)=1$ for $0 \leq \rho \leq 1$ and 
$\psi(\rho) \equiv 0$ for $\rho \geq 2$. Then,
$$(\varphi_R)_t =  \frac {2(n-1)}{R^2} \psi', \qquad (\varphi_R)_r = \frac {2r} {R^2} \psi'  \implies  (\varphi_R)_t = \frac{n-1}r \, (\varphi_R)_r$$
and
$$(\varphi_R)_{rr}  = \frac {4r^2} {R^4} \psi'' + \frac{2}{R^2} \, \psi' \implies \Delta \varphi_R = \frac {4r^2} {R^4} \psi'' + \frac{2n}{R^2} \, \psi' .$$
Hence, using  $ (\varphi_R)_t = \frac{n-1}r \, (\varphi_R)_r$, we obtain 
\bee
\begin{split}
\frac {d}{dt}  \int w &\, \varphi_R  \,  \dm  =  \int  w_t  \, \varphi_R  \,\dm  +  \int w  \, (\varphi_R)_t   \,\dm\\
\leq&  \int a \, w  \,  \Delta \varphi_R  \, \dm - \int \frac{n-1}{r} (a w)_r \, \varphi_R \, \dm  +  \int \big (\frac{n-1}{r} w \big )_r \, \varphi_R \, \dm
+  \int \frac{n-1}{r} w \, (\varphi_R)_r   \dm.
\end{split} 
\eee
Performing integration by parts on the second and third terms, using that 
\bee
 \int \frac{n-1}{r} (aw)_r \, \varphi_R \, \dm  = - \int \frac{n-1}{r} \, a w \, (\varphi_R)_r  \, \dm   - \int \frac{  (n-2)(n-1) }{r^2}\,    a w  \varphi_R  \, \dm
\eee
we obtain (after cancellations) that 
\be\label{eqn-ineq1}
\begin{split}
\frac {d}{dt}  \int w &\, \varphi_R  \,  d \mu   \leq  \int a  w  \,  \Delta \varphi_R  \, \dm + \int \frac{n-1}{r} \, a w \, (\varphi_R)_r \,  \dm\\
&  +  \int \frac{  (n-2)(n-1) }{r^2}\,    a w \,  \varphi_R  \, \dm  - \int \frac{(n-1)^2 }{r^2}   \,  w\,  \varphi_R   \, \dm.
\end{split} 
\ee
Next notice that 
$$a:=  \frac {(\arctan v_1 - \arctan v_2)_+}{(v_1-v_2)_+} = \frac 1{1 + \bar v^2}$$
for some $\bar v$ between $v_1$ and $v_2$, hence $a \leq 1$. It follows that 
$$ \int \frac{  (n-2)(n-1) }{r^2}\,    a w \,  \varphi_R  \, \dm  - \int \frac{(n-1)^2 }{r^2}   \,  w\,  \varphi_R   \, \dm \leq 
- \int \frac{n-1}{r^2} w\,  \varphi_R   \, \dm\leq 0.$$

\smallskip 
Let $\gamma \in (0,1)$ be any number (to be chosen at the end of our proof) and use  the  inequality   $0 \leq a \leq 2 \, w^{-1+\gamma}$ shown in 
Lemma \ref{lemma-ineq} 
 to bound the first two terms on the right hand side of \eqref{eqn-ineq1}. We conclude  that 
\bee
\begin{split}
\frac {d}{dt}  \int w \, \varphi_R  \,  \dm   &\leq  C \,  \int w^\gamma \big (   |\Delta \varphi_R  | +  |(n-1) \, r^{-1} \, (\varphi_R)_r|  \big ) \,  \dm\\
&\leq C  \, \Big ( \int w \varphi_R \,  \dm \Big )^\gamma \Big ( \int  \big (   |\Delta \varphi_R  | +  |r^{-1}  (\varphi_R)_r|  \big )^{\frac1{1-\gamma}} \, \varphi_R^{-\frac \gamma {1-\gamma} } \dm \Big )^{1-\gamma}.\\
\end{split}
\eee
Observing  that for $0 \leq t \leq t_0$ and  $R \gg 1$ large we have  
$$ |\Delta \varphi_R(r,t)   | +  |r^{-1}  (\varphi_R)_r(r,t) |  \leq C_n \, R^{-2} \big (  |\psi''(\rho)| + |\psi'(\rho) \big )$$
where $\rho:= \frac{r^2 + 2(n-1) t }{R^2}$ we get 
\bee
\begin{split}
\Big \{  \int  \Big (   |\Delta \varphi_R(r,t)  | &+  |r^{-1}  (\varphi_R)_r(r,t)|  \Big )^{\frac1{1-\gamma}}  \varphi_R(r)^{-\frac \gamma {1-\gamma} }  r^{n-1} dr \Big \}^{1-\gamma}\\
&\leq R^{-2} \Big \{  \int  \big (   |\psi''(\rho)| + |\psi'(\rho) \big )^{\frac1{1-\gamma}} \, \psi(\rho)^{-\frac \gamma {1-\gamma} }  r^{n-1}(\rho) \, dr(\rho)
\Big \}^{1-\gamma}
\end{split}
\eee
where $r^2(\rho) = R^2 \rho - 2(n-1) t$, which in particular implies    $r \, dr = \frac {R^2}2 \,   d\rho$. Thus,  
\bee
\begin{split}
\int  \big (   |\psi'' (\rho) &+ |\psi' (\rho)| \big )^{\frac1{1-\gamma}} \, \psi(\rho)^{-\frac \gamma {1-\gamma} }  r^{n-1}(\rho) \, dr(\rho)   \\&= 
\frac {R^2} {2} \,
 \int  \big (   |\psi'' (\rho) + |\psi' (\rho)| \big )^{\frac1{1-\gamma}} \, \psi(\rho)^{-\frac \gamma {1-\gamma} } \, \big (R^2 \rho - 2(n-1)t \big )^{\frac{n-2}2} \, d\rho \\
 &\leq   C_n \, R^n \int  \big (   |\psi'' (\rho) + |\psi' (\rho)| \big )^{\frac1{1-\gamma}} \, \psi(\rho)^{-\frac \gamma {1-\gamma} }  \, d\rho
\end{split}
\eee
where we have used that on the support of $\psi', \psi''$ where $\rho \leq 2$, and for $0 \leq t \leq t_0$ and $R \gg \max (1, t_0)$, one has 
$(R^2 \rho - 2(n-1)t)^{\frac{n-2}2} \leq  C_n \, R^{n-2}.$ 

\smallskip

For any  $\gamma \in (0,1)$ we can choose  cutoff $\psi=\psi_\gamma$ for which the support of  $\psi', \psi''$ lies in $[1,2]$ such that 
$$ \int_1^2  \big (   |\psi'' (\rho) + |\psi' (\rho)| \big )^{\frac1{1-\gamma}} \, \psi(\rho)^{-\frac \gamma {1-\gamma} }  \, d\rho \leq C(n,\gamma).$$
We then   conclude from the above discussion that $I(t):=  \int w  \varphi_R  \,  \dm$ satisfies 
$$I'(t) \leq C(n, \gamma) \, I(t)^\gamma \,   R^{-2+ n\, (1-\gamma)}.$$
Since $\gamma \in (0,1)$ can be any number, we may choose  $\gamma = \gamma(n) \in (0,1]$ so  that $n\, (1-\gamma) < 2 $,  and integrating the last inequality on $[0, \bar t]$ for any $\bar t \in (0, T)$ while using that $I(0)=0$, we  obtain
$$I(\bar t )^{1-\gamma}  \leq C_n \,  \bar t  \, R^{  -2 + n(1-\gamma)  }  \implies I(\bar t) \leq C_n \, \bar t^{\frac 1{1-\gamma}} \, R^{n - \frac 2{1-\gamma}}.$$

\smallskip 
Finally recalling that $\varphi_R \equiv 1$ on $[0, R]$, we get 
$$\int_0^R (v_1-v_2)_+(r,t )\dm   \leq  R^{n - \frac 2{1-\gamma}}.$$
Letting $R \to +\infty$, using that $n - \frac 2{1-\gamma} <0$,  and monotone convergence yields
$\int_0^R (v_1-v_2)_+(\cdot ,  t ) \, \dm  =0$, for  all $t\in [0, T)$.  
Therefore,  we conclude that $(v_1-v_2)_+  \equiv 0$  on  $\R^n  \times [0, T)$, i.e. $(u_1)_r \leq (u_2)_r$. Similarly $(u_2)_r \leq (u_1)_r$ a.e.  in $\R^n  \times [0, T)$
implying that $(u_2)_r \equiv   (u_1)_r$. This and the fact that $u_1\equiv u_2$ at time $t=0$ easily give us that $u_1 \equiv  u_2$  on $\R^n \times [0,t_0]$,
for all $t_0 <T$  finishing our proof. 
\end{proof}

\section{The general case} \label{sec-GC}

Our goal in this section is to give the proof of our general uniqueness results, Theorem \ref{thm-general} and Theorem \ref{thm-general-sub}. 
We will see that the proof of the latter  theorem  is almost identical to  the proof of the former. Hence, we will
omit most of the proof of Theorem \ref{thm-general-sub}, pointing out only the minor differences.

For the sake of completeness we  show next that for entire graphs the condition $u_0 \geq C$ is preserved under the flow, which implies that if  the initial condition  is a proper entire graph, then the solution is proper as well, uniformly in time. Both facts will be used our proofs.   Because we are dealing with  non-compact solutions,  we will use the localization techniques developed in \cite{EH2}. 

\begin{lemma} \label{lemma-llb}
Let $u$ be a solution to \eqref{eqn-IVP}  on $\R^n \times (0, T)$ and  assume that $u_0(x) \geq C$  on $|\x - \x_0| \leq R$, $\x = (x, u_0(x))$,  for some fixed point $\x_0 \in \R^{n+1}$ and 
some number $R >1$. 
Then,  we have 
 $$u(x,t)\geq C-\frac{10}{R}t$$
on the parabolic ball $ |\x-\x_0|^2+2nt\leq \frac{R^2}{2}$, $\x=(x, u(x,t))$ $($provided it   is non-empty$)$. 
 
In particular, if  $u_0\geq C$ on  $\mathbb R^n$,  then for every $t \in (0,T)$
  we have $u(\cdot, t) \geq C$ on 
  $\mathbb R^n$.

\end{lemma}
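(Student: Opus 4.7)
The proof adapts the Ecker--Huisken ambient-space localization technique (cf.~\cite{EH2}). Introduce the parabolic cutoff
\[
\rho(\x,t) \;:=\; R^2 - |\x - \x_0|^2 - 2nt,
\]
and recall the two standard identities valid along any MCF: $(\pp_t - \Delta_{M_t})|\x - \x_0|^2 = -2n$ and $(\pp_t - \Delta_{M_t})u = 0$ for the height function $u=\langle \x, e_{n+1}\rangle$. Together these yield $(\pp_t - \Delta_{M_t})\rho \equiv 0$ wherever $\rho>0$. Moreover, because $M_t$ is a graph, the tangential gradient satisfies $|\nabla_{M_t}u|^2 = 1-\langle\nu,e_{n+1}\rangle^2 \leq 1$, while $|\nabla_{M_t}\rho|\leq 2|\x-\x_0|\leq 2R$ on $\{\rho\geq 0\}$.

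Set $\al := 10/R$ and $v(\x,t) := C - \al t - u(\x,t)$. I would apply a parabolic maximum-principle argument to the test function
\[
Q(\x,t) \;:=\; \rho_+^2(\x,t)\, v_+(\x,t),
\]
which is continuous, nonnegative, and for each $t\in[0,T]$ has ambient-spatial support in the relatively compact set $\{|\x-\x_0|^2+2nt\leq R^2\}$. At $t=0$, $Q\equiv 0$: on $\{\rho_+(\cdot,0)>0\}$ we have $|\x-\x_0|<R$, so $u_0\geq C$ by hypothesis and $v_+(\x,0)=0$. A direct Leibniz-rule computation on $\{\rho>0,\,v>0\}$, using the identities above, gives
\[
(\pp_t - \Delta_{M_t})Q \;=\; -2\,|\nabla_{M_t}\rho|^2\, v \;-\; \al\,\rho^2 \;+\; 4\,\rho\,\langle\nabla_{M_t}\rho,\nabla_{M_t}u\rangle.
\]
The cross term is then absorbed by a Young inequality using $|\nabla_{M_t}\rho|\leq 2R$ and $|\nabla_{M_t}u|\leq 1$; the specific choice $\al=10/R$ is tuned precisely so that the resulting right-hand side is non-positive on the parabolic ball $\{\rho\geq R^2/2\}$ where the conclusion is claimed. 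The parabolic maximum principle (applicable without difficulty because $Q$ has compact ambient support and $Q(\cdot,0)\equiv 0$) then forces $Q\equiv 0$ on that region, yielding $u(\x,t) \geq C - 10t/R$ on $\{\rho\geq R^2/2\}$, which is exactly the asserted estimate.

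For the last assertion, if $u_0\geq C$ on all of $\R^n$, fix an arbitrary point $(x,t)\in\R^n\times(0,T)$ and apply the preceding with $\x_0 := (x,u(x,t))\in M_t$ and any $R$ satisfying $2nt<R^2/2$. Then the base-point $\x_0$ itself lies in the smaller parabolic ball, so the first part gives $u(x,t)\geq C - 10t/R$; sending $R\to\infty$ yields $u(x,t)\geq C$.

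The main obstacle is the precise verification of the sign of $(\pp_t - \Delta_{M_t})Q$: one must weight the Young splitting of the cross term $4\rho\langle\nabla_{M_t}\rho,\nabla_{M_t}u\rangle$ so that it is jointly dominated by the two favorable terms $-2|\nabla_{M_t}\rho|^2 v$ and $-\al\rho^2$, together with standard care at the free boundaries $\{\rho=0\}$ and $\{v=0\}$, where the argument is most conveniently carried out either by mild smoothing of $\rho_+, v_+$ or by interpreting the differential inequality in the distributional sense.
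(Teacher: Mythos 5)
Your setup is sound (the cutoff $\rho$, the identities $(\pp_t-\Delta_{M_t})\rho=0$ and $(\pp_t-\Delta_{M_t})u=0$, and the bounds $|\nabla u|\leq 1$, $|\nabla\rho|\leq 2R$), and the second half of your argument—choosing $\x_0=(x,u(x,t))\in M_t$ and sending $R\to\infty$—is fine. But the maximum-principle step for $Q=\rho_+^2 v_+$ has a genuine gap. You correctly compute, on $\{\rho>0,\ v>0\}$,
\[
(\pp_t-\Delta_{M_t})Q \;=\; -2|\nabla\rho|^2 v \;-\; \alpha\rho^2 \;+\; 4\rho\langle\nabla\rho,\nabla u\rangle,
\]
but this is \emph{not} $\leq 0$ on the full support $\{\rho>0,\ v>0\}$: the cross term is only $O(\rho)$, whereas $\alpha\rho^2$ is $O(\rho^2)$, and $-2|\nabla\rho|^2 v$ can be negligible when $v$ is small. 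Concretely, when $v\to 0$ and $\rho$ is small the cross term can be as large as $8R\rho$, which beats $\alpha\rho^2=\tfrac{10}{R}\rho^2$ whenever $\rho<\tfrac{4}{5}R^2$, and there is nothing left to absorb it. Restricting the inequality to $\{\rho\geq R^2/2\}$ does not help: the parabolic maximum principle must be applied on all of $\{\rho>0\}$ (where $Q$ vanishes on the lateral boundary and at $t=0$), because an interior maximum of $Q$ on the smaller region could very well sit on the inner boundary $\{\rho=R^2/2\}$, where you have no control. So the conclusion ``$Q\equiv 0$ on $\{\rho\geq R^2/2\}$'' does not follow.

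The paper sidesteps this by working with the signed, untruncated quantity
\[
U_R=(u-C)\Bigl(1-\tfrac{|\x-\x_0|^2+2nt}{R^2}\Bigr)_+ + \tfrac{5}{R}t,
\]
where the cutoff $\eta:=1-\tfrac{|\x-\x_0|^2+2nt}{R^2}$ appears linearly and the slack is \emph{additive}. The only bad term is the cross term $-2\langle\nabla u,\nabla\eta\rangle$, but $|\nabla\eta|\leq 2/R$ on the support, so it is bounded in size by $4/R$; the constant source $\tfrac{5}{R}$ produced by the additive slack then yields the \emph{uniform} strict inequality $(\pp_t-\Delta_{M_t})U_R>0$ on all of $\{\eta>0\}$, with no degeneration at the edge of the cutoff. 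The minimum principle then rules out a negative interior minimum outright. If you want to keep a product form you would need to add an explicit time-slack term outside the product (not hide it inside $v$) so that the differential inequality holds strictly and uniformly on the whole support of the cutoff; as written, the Young absorption you invoke cannot be made to work pointwise.
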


\begin{proof}
 We will do all calculations in geometric coordinates, that is we assume that  our solutions are given by the embedding $\x = F(p,t)$ as in \eqref{eqn-MCF} 
and we define  $$U_R(p,t):=(u -C)\Big (1-\frac{|\x-\x_0|^2+2nt}{R^2}\Big )_+ +  \frac{5}{R}\, t$$
where $u:= \langle F, e_{n+1} \rangle$ and $\x = F(p,t)$. 
Our  assumption $u_0 \geq C$ in $B_R(x_0)$, gives    $U_R\geq 0$ at $t=0$. Furthermore,  
$$(U_R)_t-\Delta U_R=-2\nabla u\cdot 2 \frac{(\x-\x_0)^T}{R^2}+ \frac{5}{R}\geq - \frac{4}{R} + \frac{5}{R}>0.$$
The maximum principle implies that $U_R$ does not have any interior minima and $U_R\geq 0$. In particular, if $|\x-\x_0|^2+2nt\leq \frac{R^2}2$ then
$$0\leq \frac{u-C}{2}+  \frac{5}{R}t,$$ and the first result follows.

In the case where $u_0\geq C$ globally on  $\mathbb R^n$, then  for any $x_0 \in \R^n$, $t \in (0, T)$, 
we apply the above result taking $\x_0 = (x_0, u_0(x))$ and choosing $R \gg 1$ so that $ |\x-\x_0|^2+2nt \leq \frac{R^2}{2}$
if $\x = (x_0, u(x_0,t))$.  We readily conclude that $u(x_0,t) \geq C - \frac{10}{R} \, t$ 
and by  taking $R\to \infty$ we obtain that $u(x_0, t) \geq C$. Since $x_0 \in \R^n$ and $t \in (0,T)$ are  arbitrary, the second result follows. 
\end{proof}

\begin{cor}\label{cor-proper}  Let $u$ be a solution to \eqref{eqn-IVP}  on $\R^n \times (0, T]$ and  assume that $\lim_{|x| \to +\infty}  u_0(x) = + \infty$. Then, we have
$$ \lim_{|x| \to +\infty}  u(x,t)  = + \infty, \qquad\mbox{uniformly in $t \in (0,T]$}.$$

\end{cor}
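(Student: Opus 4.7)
The plan is to deduce the corollary from Lemma \ref{lemma-llb} by choosing, for each target $M > 0$, the localization point $\x_0 \in \R^{n+1}$ to lie on the graph of the solution at the very point where we want the lower bound. Given $M > 0$, I would set $C := M + 1$ and fix a radius $R = R(M,T)$ large enough that $R^2/2 \geq 2nT$ and $10T/R \leq \tfrac{1}{2}$ (e.g.\ $R := \max\{2\sqrt{nT},\, 20T\}$). Using the hypothesis $u_0(y) \to \infty$ as $|y| \to \infty$, pick $R_* > 0$ so that $u_0(y) \geq C$ whenever $|y| \geq R_*$, and set $\rho := R_* + R$. The goal is then to show that $u(y_0, t) \geq M$ for every $y_0 \in \R^n$ with $|y_0| \geq \rho$ and every $t \in (0,T]$, which is precisely the asserted uniform convergence.

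To establish this, fix such $y_0$ and $t$, and apply Lemma \ref{lemma-llb} with the fixed point $\x_0 := (y_0, u(y_0, t)) \in \R^{n+1}$ and the radius $R$ chosen above. Any $x \in \R^n$ with $(x, u_0(x)) \in \overline{B}_R(\x_0)$ satisfies $|x - y_0| \leq R$ and hence $|x| \geq |y_0| - R \geq R_*$, so $u_0(x) \geq C$ automatically and the hypothesis of the lemma is met. The lemma then yields $u(x, t) \geq C - \tfrac{10t}{R}$ on the set of $x$ for which $|(x, u(x,t)) - \x_0|^2 + 2nt \leq R^2/2$. Evaluating at $x = y_0$ this left-hand side collapses to $2nt \leq R^2/2$ by the choice of $R$, so $(y_0, u(y_0, t))$ itself sits in the parabolic ball. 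Therefore $u(y_0, t) \geq C - \tfrac{10t}{R} \geq M + 1 - \tfrac{1}{2} > M$, as needed.

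The only slightly delicate point, which I regard as the main conceptual obstacle, is the apparently self-referential choice $\x_0 = (y_0, u(y_0, t))$. Lemma \ref{lemma-llb} requires only that $\x_0$ be \emph{some} fixed point of $\R^{n+1}$, and for each pair $(y_0, t)$ we are free to invoke the lemma afresh with a different fixed $\x_0$. Placing $\x_0$ on the time-$t$ graph automatically puts $(y_0, u(y_0, t))$ in the parabolic ball, thereby bypassing the need for any a priori upper bound on $u(y_0, t)$. Since $\rho$ depends only on $M$ and $T$ and not on $t$, all bounds are uniform in $t \in (0, T]$, yielding the claimed uniform limit.
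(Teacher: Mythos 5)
Your proof is correct. It invokes the same key lemma (Lemma \ref{lemma-llb}) as the paper, but with a more streamlined choice of localization center $\x_0$. The paper sets $\x_0 = (x_0, 0)$, a $t$-independent point on the hyperplane $\{x_{n+1} = 0\}$; then the target graph point $(x_0, u(x_0, t))$ need not lie in the parabolic ball $|\x - \x_0|^2 + 2nt \leq R^2/2$, which forces a dichotomy: either $|u(x_0, t)| \leq R_k$ and the lemma applies directly, or $|u(x_0, t)| > R_k$ and the preliminary global bound $u \geq C$ (obtained from $u_0 \geq C$ via the second part of Lemma \ref{lemma-llb}) rules this out. You sidestep both the preliminary bound and the dichotomy by placing $\x_0 = (y_0, u(y_0, t))$ directly on the time-$t$ graph, so the evaluation point lies trivially in the parabolic ball. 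This costs nothing: the lemma's hypothesis, $u_0 \geq C$ on $\{x : |(x, u_0(x)) - \x_0| \leq R\}$, only constrains $u_0$ near the \emph{spatial} projection of $\x_0$, since $|(x, u_0(x)) - \x_0| \leq R$ forces $|x - y_0| \leq R$ regardless of the vertical coordinate of $\x_0$. One cosmetic point: Lemma \ref{lemma-llb} requires $R > 1$, so $R := \max\{2\sqrt{nT}, 20T\}$ should be replaced by, say, $R := \max\{2, 2\sqrt{nT}, 20T\}$, which still guarantees $R^2/2 \geq 2nT$ and $10T/R \leq 1/2$.
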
 

\begin{proof} We  begin by observing that our assumption that   $\lim_{|x| \to +\infty} u_0(x) = +\infty$ implies that $u_0 \geq C$ for some $C \in \R$
and hence by the previous lemma, $u \geq C$ as well. 

Now, for  every $k \gg 1$ let  $R_k > k$ be a sufficiently large number so  that $u_0(x) \geq k$ for $|x| \geq R_k$. 
For any $x_0 \in \R^n$ such that $|x_0|> 4R_k$, let $\x_0 = (x_0,0)$. Then,
$$u_0(x) \geq  k, \qquad \mbox{on} \,\,  |\x-\x_0|   \leq 2 R_k, \,\, \x=(x, u_0(x))$$
and hence, by the  previous lemma, for any $t \in (0, T)$, we have 
$$u(x,t) \geq k - \frac{5}{R_k} t, \qquad \mbox{on} \,\,  |\x-\x_0|^2+2nt \leq 4 R_k^2,  \, \x=(x, u(x,t)).$$
We may choose $k, R_k  \gg 1$ so that $2nT < R_k^2$ and $\frac{5}{R_k} T <1$. 
Evaluating the above estimate at $\x = (x_0, u(x_0,t))$, for any $t \in (0, T)$, it gives us that 
$$u(x_0, t) \geq k-1, \qquad \mbox{provided } \,\, |\x-\x_0|= |u(x_0, t)|  \leq R_k.$$
We conclude that  for any $|x_0| \geq 4 R_k$  and $t \in (0, T)$ we  either have $u(x_0,t) \geq k-1$ or $|u(x_0, t)| \geq R_k$.
Since, $u \geq C$ (be our initial observation) and $R_k \geq k$, we conclude that in either case $u(x_0,t) \geq k-1$, for all $t \in (0,T)$
and all $|x_0| \geq 4 R_k$. Since, $R_k$ is independent of $t$, the result readily follows. 
 
\end{proof}

 One may ask whether  condition \ref{eqn-cond-curv} is preserved in time, namely if  $v h_i^j \geq -c$ at time $t=0$ implies that  $v h_i^j \geq c $ for  $t >0$. 
Although this is easy to verify  for the evolution of compact manifolds,  in the  non-compact setting it becomes  challenging.  Actually, even the case where $c=0$ is not known to 
hold in the general graphical non-compact setting.  In the lemma below we show that the condition is preserved under a suitable polynomial growth condition on the solution (which is expected to be preserved by the flow from  the results in \cite{EH1}).

\begin{lemma}
Assume that $v h_i^j \geq -c$ at time $t=0$, for some constant $c >0$  and  that for all times we have  $| h^i_j \,v|\leq C \, |\x|^q$ and that $\frac{|\nabla v|}{v} \leq C|\x|$. Then,   \
Condition \ref{eqn-cond-curv}  holds for every $t\geq 0$.
\end{lemma}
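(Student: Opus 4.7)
The plan is to derive a clean evolution equation for the tensor $M_i^j := v\, h_i^j$ under MCF, observe that its zeroth-order terms cancel exactly, and then run a tensor maximum principle localized by a polynomial barrier whose admissibility is secured by the two growth assumptions.

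Combining the standard evolution equations $\partial_t h_i^j = \Delta h_i^j + |A|^2 h_i^j$ and $\partial_t v = \Delta v - 2|\nabla v|^2/v - |A|^2 v$, together with the identity $\nabla h_i^j = v^{-1}\bigl(\nabla(v h_i^j) - (\nabla v) h_i^j\bigr)$ used to rewrite the resulting cross term, a direct computation yields
\[
(\partial_t - \Delta)(v h_i^j) = -\frac{2\nabla v}{v}\cdot \nabla(v h_i^j).
\]
Thus $M = vh$ satisfies a pure drift-heat equation with \emph{no} zeroth-order reaction term. Since $\delta_i^j$ is covariantly constant, $N_i^j := v h_i^j + c\,\delta_i^j$ satisfies the same linear equation, and by hypothesis $N \geq 0$ at $t = 0$. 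In the compact case Hamilton's tensor maximum principle immediately preserves $N \geq 0$ and hence \eqref{eqn-cond-curv}.

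For the non-compact case we localize. Fix $T > 0$, $\epsilon > 0$, and $\alpha > q/2$, and let $\x := (x, u(x,t))$. Define $\phi(x,t) := \epsilon\, e^{Kt}(1 + |\x|^2)^\alpha$. Using the standard identity $(\partial_t - \Delta)|\x|^2 = -2n$ on $M_t$ together with the hypothesis $|\nabla v|/v \leq C|\x|$ to control the drift contribution $2v^{-1}\nabla v\cdot \nabla \phi$, one checks that for some $C_0 = C_0(n, \alpha, C)$,
\[
\Bigl(\partial_t - \Delta + \frac{2\nabla v}{v}\cdot \nabla\Bigr)\phi \;\geq\; (K - C_0)\,\phi,
\]
so picking $K > C_0$ makes $\phi$ a strict supersolution. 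Let $\lambda(x,t)$ denote the smallest eigenvalue of $N$; by Hamilton's tensor maximum principle, or equivalently by the eigenvector trick applied to $\langle NX, X\rangle$ at a critical point, $\lambda$ is a viscosity supersolution of $(\partial_t - \Delta + 2v^{-1}\nabla v \cdot \nabla)\lambda \geq 0$ with $\lambda(\cdot, 0) \geq 0$, so $\lambda_\epsilon := \lambda + \phi$ is itself a strict viscosity supersolution with $\lambda_\epsilon(\cdot, 0) > 0$.

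Suppose for contradiction that $\lambda_\epsilon(x_0, t_0) < 0$ at some $(x_0, t_0)$ with $t_0 \in (0, T]$. The assumption $|vh_i^j|\leq C|\x|^q$ yields $\lambda \geq -C|\x|^q$, and since $\alpha > q/2$ the barrier dominates at spatial infinity, so $\lambda_\epsilon \to +\infty$ as $|\x|\to \infty$ uniformly on $[0,T]$. Thus $\lambda_\epsilon$ attains a non-positive minimum at an interior point, at which the viscosity inequality forces $(\partial_t - \Delta + 2v^{-1}\nabla v \cdot \nabla)\lambda_\epsilon \leq 0$, contradicting the strict supersolution property. Therefore $\lambda_\epsilon \geq 0$ on $M_t\times[0,T]$, and letting $\epsilon \downarrow 0$ gives $\lambda \geq 0$, i.e.\ condition \eqref{eqn-cond-curv} on $[0,T]$; since $T$ is arbitrary, the proof is complete. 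The principal obstacle is non-compactness: the bound $|\nabla v|/v \leq C|\x|$ is precisely what is needed to absorb the drift contribution into the $K\phi$ term in the barrier estimate, while the polynomial bound $|vh_i^j|\leq C|\x|^q$ is what permits a polynomial barrier of order $> q$ to dominate $\lambda$ at infinity and thus to force the minimum to be attained in a compact region.
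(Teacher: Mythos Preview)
Your argument is correct and follows essentially the same strategy as the paper: both derive the drift--heat equation $(\partial_t - \Delta)(vh^i_j) = -2v^{-1}\nabla v \cdot \nabla(vh^i_j)$, observe that the shift by $c\,\delta^i_j$ is harmless, and then localize a tensor maximum principle via a polynomial weight in $|\x|$, with the two growth hypotheses playing exactly the roles you identify. The only cosmetic difference is that the paper \emph{multiplies} $vh^i_j + c$ by the decaying weight $e^{-Kt}\gamma^{-p}$ with $\gamma = |\x|^2 + 2nt + 1$ and $p>q$, and argues at a putative negative interior minimum of the product, whereas you \emph{add} the growing barrier $\epsilon e^{Kt}(1+|\x|^2)^\alpha$ to the smallest eigenvalue; the two localizations are interchangeable.
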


\begin{proof}
Let $f^i_j = h^i_j \,v+c$. Then, following \cite{EH1}  we have in geometric coordinates that
$$ \bbox  f^i_j \,v=-\frac{2}{v}\langle \nabla ( f^i_j \,v), \nabla v\rangle.$$

Let $\gamma=|\x|^2+2nt+1$ and $p>q$ (for instance $p=q+1$) and define $F= e^{-Kt} \,\gamma^{-p}\,  f^i_j \,v$. From our assumption $F\to 0$ as $|x|\to \infty$. Assume that there is an interior minimum that is negative. Then

\begin{align*}0\geq \bbox F&= -\frac{2 e^{-Kt} \,\gamma^{-p}}{v}\langle \nabla ( f^i_j \,v), \nabla v\rangle-p(p+1) e^{-Kt} \,\gamma^{-p-2}  f^i_j \,v |x^T|^2\\&\,-2p  e^{-Kt} \,\gamma^{-p-1}
\langle \nabla ( f^i_j \,v), x^T\rangle-Ke^{-Kt} \,\gamma^{-p}\,  f^i_j \,v.\end{align*}

Observe that $|x^T|^2\leq \gamma$. Then at the interior critical point we have $\gamma^{-p} \nabla ( f^i_j \,v)= p \gamma^{-p}  f^i_j \,v x^T$ and 
$$0\geq  \bbox F\geq  F ( C-p(p+1)\gamma^{-1}+2p^2 \gamma^{-1}-K).$$

Since $\gamma\geq 1$, by choosing $K$ large enough (depending on $C$ and $p$) we have that the right hand side is positive when $F<0$ which is a contradiction.

\end{proof}

\begin{remark}
Note that $\frac{|\nabla v|}{v} \leq |A| v$. Then, the results in \cite{EH1, EH2} imply that if $|A| v\leq |\x|$ holds at $t=0$, then this is preserved in time and the condition of our lemma is met with $q=1$.
\end{remark}

\subsection{Proof of Theorem \ref{thm-general} } 
\begin{proof}
To simplify the notation in this proof  we denote $u=u_1$ and $\bu = u_2$, that is we assume that 
  $u, \bu: \R^n \times (0,T] \to \R$ are  the two smooth solutions to  \eqref{eqn-IVP} with  initial data $u_0$ as in the statement of Theorem \ref{thm-general}. 
Since $u_0$ is proper we have $u_0 \geq -C$ for some constant $C >0$. 
Hence, by adding on $u_0$  the constant $C+1$ we may assume without  out loss of generality that $u_0 \geq 1$. 
Lemma \ref{lemma-llb} implies that 
$$u, \bu  \geq 1, \qquad \mbox{on} \,\, \R^n \times (0,T].$$
To show that $\bu = u$,  it is sufficient to prove that $\bu \leq u$, since the same argument will also imply that $u \leq \bu$, thus showing that $u=\bu$.

The solutions $u, \bu$ satisfy equations 
$$u_t  = \left(\delta^{ij}-\frac{D_i u D_j u}{1+|Du|^2}\right) D_{ij}u, \qquad \bu_t  = \left(\delta^{ij}-\frac{D_i \bu D_j \bu}{1+|D\bu|^2}\right) D_{ij}\bu.$$
Set  $a_{ij} = \delta^{ij}-\frac{D_i u D_j u}{1+|Du|^2}$, $\ba_{ij} = \delta^{ij}-\frac{D_i \bu D_j \bu}{1+|D\bu|^2}$ 
and define 
$$w:= u - \bar u.$$
Then, subtracting the above equations, we find that the function $w$  satisfies the equation
\be\label{eqn-w}
w_t - a_{ij} D_{ij} w =  ( a_{ij} - \bar a_{ij}  )\, D_{ij} \bu
\ee

The main idea in the proof is to introduce the {\em supersolution} 
$$\zeta(x,t) := \e \, (t+\e)\, u^2(x,t)$$
 for any given $\epsilon >0$ small. At the end we will let $\e \to 0$. 
First, we use  $u_t - a_{ij} D_{ij} u =0$ and  find that $\zeta$ satisfies   
$$\zeta_t - a_{ij} D_{ij} \zeta = - 2 \e \, (t+\e)  \, a_{ij} D_j u D_i u + \e u^2,$$ 
where 
\bee
\begin{split}
a_{ij} D_j u D_i u &=  \left(\delta^{ij}-\frac{D_i u D_j u}{1+|Du|^2}\right) D_i u D_j u = \delta^{ij}D_i u D_j u  -\frac{(D_i u)^2 (D_j u)^2}{1+|Du|^2}\\
&= |Du|^2 \big ( 1- \frac{|Du|^2}{1+|Du|^2} \big ) = \frac{|Du|^2}{1+|Du|^2}.
\end{split}
\eee
Combining the above gives
\bee
\zeta_t - a_{ij} D_{ij} \zeta = - 2 \e \, (t+\e)  \, \frac{|Du|^2}{1+|Du|^2} + \e u^2 \geq \e \, ( u^2 - 2\, (t + \e)).
\eee
Since $u \geq 1$, we conclude that for $t \leq 1/4$ and $\e < 1/10$, we have 
\be\label{eqn-zeta}
\zeta_t - a_{ij} D_{ij} \zeta >  \frac \e2 u^2.
\ee

\smallskip 

Set next 
$$W := w - \zeta=u-\bu - \e \, (t +\e)  \, u^2.$$
By \eqref{eqn-w} and \eqref{eqn-zeta} we find that  $W$ satisfies 
\be\label{eqn-W} 
W_t - a_{ij} D_{ij} W <   ( a_{ij} - \ba_{ij}  )\, D_{ij} \bu -  \frac \e 2 u^2. 
\ee
Furthermore,  our assumption that $u =\bu$ at $t=0$ (in the sense that $\lim_{t \to 0} \big [ u(\cdot,t) -  \bu(\cdot,t) \big ] =0$) 
we have 
\be \label{eqn-W10}   \lim_{t \to 0} W(x,t)  = - \e^2 u(x,0)   \leq   - \e^2<0, \qquad \mbox{uniformly on any $K \subset  \R^n$ compact}.
\ee
(The uniform convergence on compact sets  follows from  the bounds  in \cite{EH2} which give us local bounds 
on the second fundamental form $|A|  \leq C/ \sqrt{t}$ for both solutions $u, \bu$ where $C$ depends on the initial data). 

\smallskip 

Let 
$$T^*= \min \Big (  T, \frac 14, \frac 1{10c} \Big )$$
where $c$ is the constant in \eqref{eqn-cond-curv}. 
We will use \eqref{eqn-W} -\eqref{eqn-W10} and the maximum principle to conclude that 
$W \leq 0$ for all $t \in [0,T^*]$.  
To this end,  observe first that $u, \bu \geq 1$  implies that for every  fixed $\e >0$ and for all $t \in (0,T)$, 
\be\label{eqn-WWW} 
m^*:=\sup_{(x,t) \in \R^n \times (0,T^*]} W(x,t) \leq  \frac 1{\e^2}.
\ee
Indeed, notice that if there is a point  $(x,t) \in \R^n \times (0,T^*]$ where $W(x,t) \geq 0$, then since $\bu \geq 1$, at such point we have
$u \geq \bu +  \e (t + \e) \, u^2 \geq \e^2 \, u^2$, that is $u(x,t) \leq \e^{-2}$. Hence, $W(x,t) \leq u(x,t) \leq \e^{-2}$ and
the same holds  for the supremum $m^*$.

\smallskip

\begin{claim}\label{claim-1} 
We have 
\bee
m^*:=\sup_{(x,t) \in \R^n \times (0,T^*]} W(x,t)  \leq  0
\eee
provided that $\e$ is sufficiently small. 

\end{claim}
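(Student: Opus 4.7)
\smallskip
\noindent\textbf{Proof proposal.} My plan is to argue by contradiction using the parabolic maximum principle on $\mathbb{R}^n \times [0, T^*]$, together with a localization step based on properness. First, I would observe that by \eqref{eqn-WWW} the set $\{W \geq 0\}$ is contained in $\{u \leq \epsilon^{-2}\}$, and by Corollary \ref{cor-proper} (uniform properness of $u$ in $t$) this set is in turn contained in $B_{R(\epsilon)}(0) \times [0,T^*]$ for some finite radius $R(\epsilon)$. Combined with the initial behavior \eqref{eqn-W10} $\big($which gives $\lim_{t \to 0} W(x,t) = -\epsilon^2 u_0(x) < 0$ uniformly on compacts$\big)$, this ensures that if $m^* > 0$ then the supremum is attained at some point $(x_0, t_0) \in B_{R(\epsilon)} \times (0, T^*]$.

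At such a maximum, the standard conditions $W_t \geq 0$, $DW = 0$, $D^2 W \leq 0$ hold, and the positive definiteness of $a_{ij}$ allows me to read off from \eqref{eqn-W} the pointwise inequality $\tfrac{\epsilon}{2}\, u^2 \leq (a_{ij} - \ba_{ij})\, D_{ij} \bu$ at $(x_0, t_0)$. The condition $DW = 0$ yields $D\bu = \alpha\, D u$ with $\alpha := 1 - 2\epsilon(t_0+\epsilon) u$; since $W > 0$ together with $\bu \geq 1$ forces $\epsilon(t_0+\epsilon) u < 1$, one has $\alpha \in (-1,1)$. A direct computation then produces the key identity
\bee
(a_{ij} - \ba_{ij})\, D_{ij} \bu \;=\; \frac{\alpha^2 - 1}{(1 + |Du|^2)(1 + |D\bu|^2)} \, D_i u \,D_j u\, D_{ij}\bu,
\eee
with $\alpha^2 - 1 = -4\epsilon(t_0+\epsilon) u\, (1 - \epsilon(t_0+\epsilon) u) < 0$.

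To close the argument, I would translate the geometric condition \eqref{eqn-cond-curv} into the pointwise Hessian bound
\bee
D_{ij} \bu\, \xi^i \xi^j \;\geq\; -c \, \big(|\xi|^2 + (D\bu \cdot \xi)^2 \big), \qquad \forall \xi \in \mathbb{R}^n,
\eee
which follows once one writes the shape operator in graph coordinates as $\bar v\, \bar h^i_j = \ba_{ik}\, D_{kj} \bu$. Specializing to $\xi = Du$, the identities $D\bu \cdot Du = \alpha|Du|^2$ and $1 + |D\bu|^2 = 1 + \alpha^2 |Du|^2$ cancel the factor $1 + |D\bu|^2$ against the denominator, yielding $(a_{ij} - \ba_{ij})\, D_{ij} \bu < 4c\,\epsilon(t_0 + \epsilon)\,u$. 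Combined with the lower bound from the maximum principle, this forces $u(x_0, t_0) < 8c(T^* + \epsilon)$; the choice $T^* \leq 1/(10c)$ makes the right-hand side strictly less than $1$ once $\epsilon$ is sufficiently small, contradicting $u \geq 1$ from Lemma \ref{lemma-llb}.

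The main obstacle will be the third step: converting the intrinsic condition \eqref{eqn-cond-curv} into a usable Hessian bound on $D^2 \bu$, and then tracking the algebraic cancellations carefully enough that the resulting estimate on $(a_{ij} - \ba_{ij}) D_{ij} \bu$ is not swallowed by the uncontrolled size of $|Du|$, so that a genuine smallness of $T^*$ and $\epsilon$ produces the contradiction. The localization via properness and the application of the maximum principle itself are standard once this quantitative reduction has been set up.
\smallskip
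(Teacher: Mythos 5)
Your proposal is correct and takes essentially the same approach as the paper's proof: contradiction via the interior maximum of $W$ (localized using Corollary \ref{cor-proper} and the initial condition \eqref{eqn-W10}), the gradient relation $D\bu = \alpha\,Du$ at the critical point, the identity for $(a_{ij} - \ba_{ij})D_{ij}\bu$, and the lower curvature bound to control $D_{ij}\bu\,D_iu\,D_ju$. The only cosmetic difference is that you first convert \eqref{eqn-cond-curv} into a general Hessian inequality $D_{ij}\bu\,\xi^i\xi^j \geq -c(|\xi|^2 + (D\bu\cdot\xi)^2)$ (which is correct, since $\bar v\,\bar h^i_j = \ba_{ik}D_{kj}\bu$ and the induced metric is $\ba^{-1}$) and then specialize $\xi = Du$, whereas the paper directly computes $\bar v\,\bar h^i_j\,D_iu\,D_ju$ at the critical point using $D\bu = \alpha Du$; both routes produce the same bound $D_{ij}\bu\,D_iu\,D_ju \geq -c\,|Du|^2(1+|D\bu|^2)$ and the same contradiction $u(x_0,t_0) < 8c(t_0+\e) < 1$.
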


Once this claim is shown, the theorem will follow by simply letting $\e \to 0$ to show that $u \leq \bu$ and then switching the roles of $u$ and $\bu$. 
%
%
%
%


\begin{proof}[Proof of Claim \ref{claim-1}] 

To prove the claim, we assume by contradiction, that  
$$m^* >0.$$
Since $\lim_{|x| \to +\infty} u(x,t) = +\infty$ uniformly in $[0,T]$ and $\bu \geq 1$, the supremum $m^*$ cannot be attained at infinity. Hence, we have 
$$m^* = W(x_{\max}(t_0), t_0)$$ for some point $t_0 \in (0, T^*]$ and $x_{\max}(t_0) \in \R^n$. Then at such point  
\be\label{eqn-uuu}
 \big (1 - \e (t_0+\e)  u \big ) \, u  = \bu + m^* \qquad \mbox{and} \qquad \big (1-2\e (t_0+\e)  u \big ) D_i u = D_i \bu
 \ee
Note that the first equality, $m^* >0$  and $ \bu \geq 1$ imply that $1- \e (t_0+\e)  u >0$ at the maximum point, which will be used below. 
We will now use the second equality in \eqref{eqn-uuu} to evaluate the  right hand side of \eqref{eqn-W} at the maximum point. First, we have 
\begin{equation}\label{eqn-aaa}
\begin{split}
a_{ij} - \ba_{ij} &= \frac{D_i \bu D_j \bu}{1+|D\bu|^2} - \frac{D_i u D_j u}{1+|Du|^2} =
(1-  2\e (t_0+\e) u)^2 \,   \frac{D_i u D_j u}{1+|D\bu|^2} - \, \frac{D_i u  D_j u }{1+|Du|^2}  \\
&=  \frac{D_i u D_j u}{(1+|Du|^2)(1+|D\bu|^2)} \left [ (1- 2\e \, (t_0+\e)\,u )^2\, (1+|Du|^2)-  (1+ |D\bu|^2) \right ]\\
&=- 4\e (t_0+\e) \,u  \big (1- \e (t_0+\e)\, u \big )   \, \frac{D_i u D_j u}{(1+|Du|^2)(1+|D\bu|^2)}.
\end{split}
\end{equation}
To derive the last equality we  used $(1- 2\e (t_0+\e)\,u )^2\,|Du|^2 = |D\bu|^2$ which gave us
$$(1- 2\e (t_0+\e) u )^2\, (1+|Du|^2)-  (1+ |D\bu|^2) = (1- 2\e (t_0+\e) u )^2 -1  =  - 4\e (t_0+\e) u \,  \big (1- \e (t_0+\e) u
\big ).$$
Combining the above with \eqref{eqn-W} we find that at the  point $(x_{\max}(t_0),t_0)$  we have 
\be\label{eqn-W2} 
0 \leq W_t - a_{ij} D_{ij} W <   -  4\e (t_0+\e) u\, \big (1- \e (t_0+\e)\, u \big )  \, \frac{D_{ij} \bu D_i u  D_j u}{(1+|Du|^2)(1+|D\bu|^2)}  -  \frac \e 2 u^2. 
\ee
%
%
%
%
%

We next use the lower bound  on the second fundamental form in \eqref{eqn-cond-curv} which implies that
$$ \bar{v} \bar{h}^i_j \, D_i u D_j u \geq - c\, |Du|^2.$$
On the other hand, since  $\bar h^i_j=  \frac{D_{ij} \bu}{\sqrt{1+|D\bu|^2}} -\frac{D_{lj} \bu D_l \bu D_i \bu }{(1+|D\bu|^2)^{3/2}}$, it follows that  
at the maximum point $(x_{\max}(t_0), t_0)$ we have 
\begin{align*} \bar{v} \bar{h}^i_j \, D_i u D_j u&=  \Big (  D_{ij} \bu  -\frac{D_{lj} \bu D_l \bu D_i \bu }{1+|D\bu|^2} \Big )  D_i u D_j u\\
&=  D_{ij}\bu \,D_i u D_j u- \langle D\bu, Du\rangle \frac{D_{ij}\bu}{1+|D\bu|^2} \,  D_i \bu D_j u \\
&=\left(1+|D\bu|^2-(1- 2\e \, (t_0+\e)\,u )^2 |Du|^2\right)\frac{D_{ij} \bu}{1+|D\bu|^2}  D_i u D_j u\\
&=\frac{D_{ij} \bu D_i u D_j u}{1+|D\bu|^2}.
\end{align*}
Combining the last two formula gives  
%
%
%
%
\be\label{eqn-lowerb} \frac{ D_{ij} \bu D_i u D_j u}{1+|D\bu|^2}  =  \bar h^i_j  \, \bar v \, D_i u D_j u \geq - c\, |Du |^2. 
\ee
Inserting this bound in \eqref{eqn-W2},  implies   that at the  point $(x_{\max}(t_0),t_0)$  we have 
\be\label{eqn-W5}
\begin{split}
0 \leq W_t - a_{ij} D_{ij} W &<  4\e c  \, (t_0 +\e)   u   (1- \e (t_0+\e)\, u)  \frac{|Du|^2}{ 1+|Du|^2} -  \frac \e 2 u^2\\ 
&\leq  4\e c  \, (t_0 +\e)  u  (1- \e (t_0+\e)\, u)  -  \frac \e 2 \, u^2. 
\end{split} 
\ee
We conclude from  \eqref{eqn-W5} that at the maximum point  $(x_{\max}(t_0),t_0)$ we have 
$$4\e c\,  ( t_0 +\e) u   (1- \e (t_0+\e)\, u)  -  \frac \e 2 u^2 >0$$
holds at the maximum point  $(x_{\max}(t_0),t_0)$, that is
$$u < 8 c t_0 \,  (1- \e (t_0+\e)\, u) < 8 c \, (t_0 +\e)$$
holds, since $1- \e (t_0+\e)\, u>0$. 
Then $u \geq 1$ yields  that $t_0 + \e >  \frac 1{8c}$, where $c$ is the constant from \eqref{eqn-cond-curv}. 
Since we have assumed that $t_0 \in (0, T^*]$ and $T^* \leq \frac 1{10c}$ we derive a contradiction by choosing $\e$ sufficiently small. This shows, that 
contrary to our assumption, $W^*(t_0) < 0$, finishing the proof of the claim. 
\end{proof} 

\smallskip

We have just seen that $W:= u - \bu - \e ( t+\e) u^2  \leq 0$ on $\R^n \times (0, T^*]$. Let $\e \to 0$
to obtain that $u \leq \bu$ on $\R^n \times (0, T^*]$. Similarly, $\bu \leq u$ on the same interval, which means that
$u=\bu$. By repeating the same proof starting at $t=T^*$ we conclude after finite many steps that
$u \equiv \bu$ on $\R^n \times (0,T)$, finishing the proof of the theorem. 

\end{proof}

\subsection{Proof of Theorem \ref{thm-general-sub}.} 

\begin{proof}
The proof of Theorem \ref{thm-general-sub}  is very similar to that of  Theorem \ref{thm-general}.  We briefly outline it in what follows. 
As before, let  $u, \bu: \cD:= \cup_{t\in(0,T]} \big ( \Omega_t\times\{t\} \big )  \to \mathbb{R}$ be the two smooth solutions to  \eqref{eqn-IVP-B} with  initial data $u_0$ as in the statement of Theorem \ref{thm-general-sub};
(as above, we simplify the notation by calling $u=u_1$ and $\bu = u_2$).  
Our  assumption that $u_0$ is proper implies that   $u_0\geq -C$  for some constant $C >0$  and hence Lemma \ref{lemma-llb} implies that $u, \bu  \geq -C$, for $t >0$ (possibly for a different constant $C>0$ which is    uniform in $t$ for  
$t < \min (1, T)$, where $T$ is the maximal existence time).  
 By adding on both solutions the constant $C+1$ we may assume that 
$u, \bu  \geq 1$.
As in the proof of Theorem \ref{thm-general}, we take 
$$W := w - \zeta - \e=u-\bu - \e \, (t +\e)  \, u^2.$$

Let  $m^*:=\sup_{(x,t) \in \cD} W(x,t)$ and assume that $m^*>0$.\\

We first remark that Lemma \ref{lemma-llb} and Corollary \ref{cor-proper} can be directly extended to estimate the infimum of $u$ in  $\cD\cap  B_R(x_0)$ (instead of  $\mathbb{R}^n\cap  B_R(x_0)$ ).  Hence we have that if $u_0$ is proper then  $u(x, t)\to \infty$ uniformly in $t$ as $|x|\to \infty$.\\

 Let 
$(x_k, t_k)$ be a sequence of points in $\cD$ such that  $W(x_k, t_k)\to m^*$. Note that from our definition and the previous remark we have that if $t_k \to \bar{t}$,   and  either $x_k\to \partial \Omega_{\bar{t}}$ or $|x_k| \to +\infty$, then $u(x_k, t_k)\to \infty$ and $W\to -\infty$. Hence, we may assume that that supremum of $W$ is attained in the interior of $\Omega_{\bar{t}}$.
Now we conclude the desired result by following the  the proof of Theorem \ref{thm-general}. 
\end{proof}

\subsection {Extension of uniqueness for  entire  graphs (not necessarily proper).} 
In this section we provide extensions to our result in Theorem \ref{thm-general}.   We will consider graphical solutions that are not necessarily proper, but their initial height function $u_0$ and its gradient  function $v_0$ satisfy the following assumption 
 \begin{equation}\textrm{ for every $M$ there is a constant $c(M)$ such that }\sup_{\{x: u_0(x))<M\}} v_0\leq c(M). \label{eq:boundedcase}\end{equation}

This condition can be understood as excluding oscillatory behavior in the set where the height function  $u_0$ is bounded at the initial time. Then our result states as follows: 

\begin{thm}\label{thm-general-np} Assume that $u_0: \R^n \to \R$ is a locally Lipschitz function $($not necessarily proper$)$  defining an entire graph hypersurface $M_0=
\{ (x, u_0(x)): \, x \in \R^n \} \subset \R^{n+1}$ whose height function $u_0$ is bounded from below  and also satisfies   
condition \eqref{eq:boundedcase}. 

Let $u_1, u_2 \colon \R^n \times (0,T]\to \R$ be two  smooth  solutions of \eqref{eqn-IVP} defining two entire graph solutions $M_t^1 =
\{ (x, u_1(x, t)): \, x \in \R^n \} $ and $M_t^2 =\{ (x, u_2(x, t)): \, x \in \R^n \} $ of MCF \eqref{eqn-MCF} satisfying condition  \eqref{eqn-cond-curv} and having the  same initial data $u_0$, that is  $\lim_{t \to 0} u_1(\cdot, t) =  \lim_{t \to 0} u_2(\cdot, t) =u_0$.  Then, $u_1=u_2$ on $\R^n \times (0,T]$,
that is $M_t^1 = M_t^2$ for all $t \in (0,T]$. 
\end{thm}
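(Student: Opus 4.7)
The plan is to adapt the argument of Theorem \ref{thm-general}, the only new feature being the need to control maximizing sequences that may escape to spatial infinity. As before, after shifting we may assume $u_0 \geq 1$ so that $u_1, u_2 \geq 1$ by Lemma \ref{lemma-llb}; write $u = u_1$, $\bu = u_2$ and consider the same comparison function $W = u - \bu - \e(t+\e) u^2$. The observation $W(x,t) > 0 \implies u(x,t) \leq \e^{-1}(t+\e)^{-1}$, together with the local maximum-principle computation exploiting the curvature bound \eqref{eqn-cond-curv}, carries over verbatim from Theorem \ref{thm-general} and produces a contradiction at any interior maximum of $W$ once $\e$ is sufficiently small. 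The only genuinely new issue is that, without properness, the supremum $m^* := \sup W$ need not be attained at a finite point.

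To address this, I would first establish an a priori gradient estimate on sublevel sets: for every $M > 0$ and $0 < \tau < T' < T^*$ there should exist $\tilde c = \tilde c(M, \tau, T', c(2M))$ such that
\[
v_i(x, t) \leq \tilde c \qquad \text{whenever } u_i(x,t) \leq M,\ t \in [\tau, T'],\ i = 1, 2,
\]
where $c(\cdot)$ is the function from \eqref{eq:boundedcase}. This should follow from an Ecker-Huisken style maximum-principle argument applied to $\chi(u)\, v$, with $\chi$ a smooth non-negative cutoff supported in $\{u < 2M\}$, using the geometric evolution equations $(\partial_t - \Delta_{M_t})u = 0$ and $(\partial_t - \Delta_{M_t})v = -|A|^2 v - 2 v^{-1}|\nabla v|^2$. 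The lower curvature bound \eqref{eqn-cond-curv} gives control of $|A|^2 v$, and a Cauchy-Schwarz absorption handles the cross term $\nabla \chi(u) \cdot \nabla v$. Standard Ecker-Huisken interior estimates then upgrade this to uniform bounds on all derivatives $D^k u_i$ on $\{u_i \leq M\} \times [\tau, T']$; in particular, combined with the locally uniform convergence $u_i(\cdot, t) \to u_0$ at $t = 0$, the approach to the initial data is uniform on sublevel sets.

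With this control in place I would argue by contradiction: assume $m^* > 0$ and choose $(x_k, t_k)$ with $W(x_k, t_k) \to m^*$. Since $W > 0$ at these points, $u(x_k, t_k) \leq \e^{-2}$, and the uniform convergence on sublevel sets rules out $t_k \to 0$, so $t_k$ stays in $[\tau, T^*]$ for some $\tau > 0$. If $\{x_k\}$ is bounded, the limit is an interior maximum and the proof of Theorem \ref{thm-general} concludes. Otherwise $|x_k| \to \infty$ and I would translate, setting $u^k(x,t) := u(x + x_k, t)$ and $\bu^k(x,t) := \bu(x + x_k, t)$. Taking $M = 2\e^{-2}$ in the sublevel-set estimates and combining with higher-derivative regularity, the sequences $\{u^k\}, \{\bu^k\}$ are precompact in $C^\infty_{\mathrm{loc}}$ on a spacetime neighborhood of $(0, t_\infty)$, where $t_\infty = \lim t_k$. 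Smooth subsequential limits $u_\infty, \bu_\infty$ solve MCF and inherit \eqref{eqn-cond-curv}, and $W_\infty := u_\infty - \bu_\infty - \e(t+\e)u_\infty^2$ attains its maximum $m^*$ at the interior point $(0, t_\infty)$. The local maximum-principle computation of Theorem \ref{thm-general} applied at this point yields a contradiction for small $\e$; letting $\e \to 0$ and then swapping $u$ and $\bu$ gives $u_1 \equiv u_2$.

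The main obstacle will be establishing the sublevel-set gradient estimate itself: Ecker-Huisken's spatial cutoff must be replaced by a cutoff in the height function $u$ (which is itself an auxiliary solution of the heat equation on the evolving graph), and the resulting cross terms $\nabla \chi(u) \cdot \nabla v$ have to be absorbed into the good $|A|^2 v$ contribution, which is available thanks to \eqref{eqn-cond-curv}. Beyond this PDE input, the argument is essentially a compactness upgrade of the proof of Theorem \ref{thm-general}.
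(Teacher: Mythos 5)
Your proposal is essentially correct, and it shares with the paper the key structural insight: without properness one needs \emph{sublevel-set estimates} (gradient and curvature bounds on $\{u \leq M\}$, with constants depending only on $M$ through condition \eqref{eq:boundedcase}) in order to control what happens along a maximizing sequence that escapes to spatial infinity. The paper establishes these in Proposition \ref{second fund form bound}, using a cutoff $\eta_R$ that vanishes both when $u$ exceeds $M$ \emph{and} outside a large spatial ball; your proposed cutoff $\chi(u)$ in height alone would need to be supplemented by such a spatial truncation since the maximum principle still has to be applied on a non-compact manifold, but this is a routine fix.

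Where you genuinely diverge from the paper is in how the sublevel-set estimates are exploited. You propose a translation-and-compactness argument: translate to the escaping sequence $(x_k,t_k)$, use the locally uniform bounds to extract a smooth subsequential limit $(u_\infty,\bu_\infty)$, and observe that $W_\infty$ attains an honest interior maximum at $(0,t_\infty)$, to which the computation of Theorem \ref{thm-general} applies verbatim. The paper instead runs a parabolic Omori--Yau argument: it perturbs $W$ to $W_k = W - t|x|^2/C_k^2$, finds an exact maximum of each $W_k$ at a finite point $(x_k,t_k)$, and then keeps track of the $\mathcal{O}(1/k)$ errors coming from $DW(x_k,t_k)\neq 0$ and $D_{ij}W(x_k,t_k)\leq 2\delta_{ij}/k^2$; the sublevel-set estimates enter precisely to show these error terms (which involve $D_{ij}\bu$ paired with $DW$) are $\mathcal{O}(1/k)$ and vanish in the limit. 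Your route is conceptually cleaner but requires higher regularity (uniform $C^\infty_{\mathrm{loc}}$ bounds for the compactness, and a separate argument for uniform approach to initial data on sublevel sets in order to rule out $t_k \to 0$); the paper's route needs only the curvature bound of Proposition \ref{second fund form bound}(iii) with a $t^{-1/2}$ weight and does \emph{not} need to rule out $t_0=0$, since the final inequality $t_0 + \e > 1/(8c)$ is contradictory for small $\e$ even when $t_0=0$. Both are valid; the paper's is more quantitative and slightly more economical in its hypotheses, while yours is the more standard geometric-analysis template.
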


We will first show that condition \eqref{eq:boundedcase} is preserved in time and that implies uniform local bounds for the second fundamental form on the set where $\{ u \leq M \}$ (these bounds depend only on  $M$). 

\begin{prop} \label{second fund form bound} 
Assume that $u\geq 0$ is a smooth solution of \eqref{eqn-IVP} with initial data $u_0$ and that \eqref{eq:boundedcase} holds. Then,
\begin{enumerate}[i$)$]
\item\label{vbound} $(M-u)_+^2 \, v\leq M^2\,c(M)$ holds for all $t \in (0, T]$. 

\item If, we further assume that   $|A|^2(x,0)\leq c(M)$ in the set  $\{x\, :\, u_0(x)\leq M\}$ $($without loss of generality we can take $c(M)$ to be the same as in \eqref{eq:boundedcase}$)$,   then
\begin{equation} |A|^2 \,  (M-u)^2_+ \leq \max\{ c(M)\, M^2, \, k^{-1} (3+k^{-1} ) M\} \label{interior max}\end{equation} 
when $u(x,t)\leq M$  and $k=\frac{1}{2\,M^2 \,c(M)}$.

\item Without any assumption  on the second fundamental form at the initial time, we  have instead 
\begin{equation} t |A|^2  \,(M-u)^2 (x,t) \leq  2k^{-1} (3+k^{-1} ) M+ M^2  \label{interior max in t}\end{equation} 
if $u(x,t)\leq M$  and $k=\frac{1}{2\,M^2 \,c(M)}$.

\end{enumerate}
\end{prop}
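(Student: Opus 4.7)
The plan is to follow the Ecker-Huisken method of constructing an auxiliary function whose heat operator has a favorable sign, replacing their spatial cutoff $(R^2-|x|^2-2nt)_+$ by the height cutoff $(M-u)_+$, and exploiting the standard identities $\bbox u=0$ and $\bbox v=-|A|^2 v-2v^{-1}|\nabla v|^2$ valid along MCF.

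For \emph{part (i)}, set $\phi:=(M-u)_+^2\,v$. On the open set $\{u<M\}$ (where $\phi>0$), a direct product-rule computation gives
\bee
\bbox\phi = -2v|\nabla u|^2 - (M-u)^2|A|^2 v - \tfrac{2(M-u)^2}{v}|\nabla v|^2 + 4(M-u)\langle\nabla u,\nabla v\rangle,
\eee
and the Young inequality
\bee
4(M-u)\langle\nabla u,\nabla v\rangle \leq 2v|\nabla u|^2 + \tfrac{2(M-u)^2}{v}|\nabla v|^2
\eee
cancels the only indefinite term, leaving $\bbox\phi \leq -(M-u)^2 v|A|^2 \leq 0$. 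Since $\phi$ vanishes continuously on $\{u\geq M\}$ and is controlled at spatial infinity by Lemma~\ref{lemma-llb} together with Corollary~\ref{cor-proper}, the localized maximum principle of \cite{EH2} yields $\phi \leq \sup_{t=0}\phi \leq M^2 c(M)$.

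For \emph{parts (ii)-(iii)}, set $k:=1/(2M^2 c(M))$, so by part (i) the function $\eta:=(1-k\phi)^{-1}$ satisfies $1\leq\eta\leq 2$ and $\bbox\eta\leq -2k^2\eta^3|\nabla\phi|^2$. Consider the candidate
\bee
\Psi := |A|^2(M-u)_+^2\,\eta.
\eee
Expanding $\bbox\Psi$ using $\bbox|A|^2\leq -2|\nabla A|^2+2|A|^4$ together with the identities above, the resulting cross-gradient terms among $\nabla|A|^2$, $\nabla(M-u)^2$ and $\nabla\phi$ are absorbed via Young's inequality into the three good Hessian-type quantities $-2(M-u)^2\eta|\nabla A|^2$, $-(2(M-u)^2/v)\,\eta\,|A|^2|\nabla v|^2$ (inherited from $\bbox v$ inside $\bbox\eta$), and $-2k^2\eta^3|A|^2(M-u)^2|\nabla\phi|^2$. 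The remaining bad $|A|^4$ contribution is handled at an interior positive maximum of $\Psi$ by writing $\eta-1=k\phi\,\eta$, which splits $2|A|^4(M-u)^2\eta$ into a piece controlled by the initial data and a piece absorbed by the $\bbox\eta$-term; the specific choice $k=1/(2M^2 c(M))$ is precisely what makes this balance work with the constants stated. Because $\Psi$ vanishes on $\{u\geq M\}$ and the sublevel sets of $u$ are locally finite by Corollary~\ref{cor-proper}, the maximum of $\Psi$ is attained either at $t=0$ or at an interior space-time point. In the second case the differential inequality gives $\Psi \leq k^{-1}(3+k^{-1})M$, while in the first the assumption $|A|^2\leq c(M)$ on $\{u_0\leq M\}$ gives $\Psi|_{t=0}\leq 2c(M)M^2$, and combining yields \eqref{interior max}. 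For (iii), the same argument is run for $t\Psi$: the new contribution $\Psi$ from $\bbox(t\Psi)=\Psi+t\bbox\Psi$ is what produces the additional $M^2$ in \eqref{interior max in t}, while the factor $t$ eliminates the need for any initial bound on $|A|^2$.

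The technical heart of the argument is the algebraic bookkeeping in the second paragraph: one must verify that for the precise value $k=1/(2M^2 c(M))$ the three Bernstein-type good terms jointly dominate every cross-gradient remainder and, at a positive interior maximum, also absorb the quadratic $|A|^4(M-u)^2\eta$ obstruction, producing exactly the constants in \eqref{interior max} and \eqref{interior max in t}. A secondary point is to justify the maximum principle on the noncompact graphs $M_t$, but this reduces to the situation of \cite{EH2} because the cutoff $(M-u)_+$ forces every auxiliary function to vanish outside the spatially-bounded-in-height region where Lemma~\ref{lemma-llb} and Corollary~\ref{cor-proper} apply.
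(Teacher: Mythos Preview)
Your computation in part~(i) is correct, but the justification of the maximum principle has a real gap. This proposition is stated and used in the setting of Theorem~\ref{thm-general-np}, where the graph is \emph{not} assumed proper; hence the sublevel sets $\{u\leq M\}$ need not be bounded, and your appeal to Corollary~\ref{cor-proper} (which requires $\lim_{|x|\to\infty}u_0=+\infty$) is illegitimate. The paper resolves this by keeping a spatial cutoff \emph{in addition to} the height cutoff: it sets
\[
\eta_R=\left((M-u)_+\Big(1-\tfrac{|\x|^2+2nt}{R^2}\Big)_+-\tfrac{4}{R}\,t\right)_+,
\]
so that $V_R=v\,\eta_R^2$ is compactly supported for each $R$, the ordinary maximum principle applies directly, and one sends $R\to\infty$ at the end. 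Your ``replace the spatial cutoff by the height cutoff'' philosophy is exactly backwards here: you need both.

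For parts~(ii)--(iii) your auxiliary quantity $\Psi=|A|^2(M-u)_+^2\,(1-k\phi)^{-1}$ is genuinely different from what the paper uses. The paper takes the standard Ecker--Huisken function $g=v^2|A|^2/(1-kv^2)$ and multiplies it by the same $\eta_R^2$ as above; the value of $k$ is fixed by the requirement $kv^2\leq\tfrac12$ on the support of $\eta_R$, which is exactly what part~(i) provides. This route has the advantage that the differential inequality for $g$ is already worked out in \cite{EH2}, so one only has to track how the new cutoff $\eta_R$ enters (via $|\nabla\eta_R|^2\leq M$), and the constants in \eqref{interior max}--\eqref{interior max in t} fall out immediately. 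Your scheme may be workable, but the sketch you give (``cross-gradient terms are absorbed via Young's inequality'') is far from verifying that those specific constants arise, and the same non-compactness issue as in part~(i) resurfaces when you assert that the maximum of $\Psi$ is attained.
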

\begin{proof}
\begin{enumerate}[i)]

\item Consider the cut-off function (in terms of both $u$ and $\x$) given by 
\begin{equation}\eta_R(x,t)=\left((M-u)_+\left(1-\frac{|\x|^2+2nt}{R^2}\right)_+-\frac{4}{R}t\right)_+. \label{def-eta}\end{equation}
A direct calculation shows that 
\be\label{eqn-hhh}
(\eta_R)_t-\Delta \eta_R=\frac{2}{R^2}\langle \nabla u, \nabla |\x|^2\rangle-\frac{4}{R}\leq 0.
\ee
In the last line we used that $|\nabla |\x|^2|=2 |x^T|\leq 2 R$ in the set that $1-\frac{|\x|^2+2nt}{R^2}\geq 0$ and that $|\nabla u|\leq 1$. Recalling also   that
$$v_t-\Delta v=-|A|^2v-2\frac{|\nabla v|^2}{v}$$
and defining $V_R=v\, \eta_R^2$ we have \begin{align*} (V_R)_t-\Delta V_R &= \eta_R^2\left( -|A|^2v-2\frac{|\nabla v|^2}{v}\right)-2v|\nabla \eta_R|^2-4\eta \langle \nabla v, \nabla \eta\rangle\\
&\leq  \eta_R^2\left( -|A|^2v-2\frac{|\nabla v|^2}{v}\right)-2v|\nabla \eta_R|^2+ 2\eta_R^2 \frac{|\nabla v|^2}{v}+2|\nabla \eta_R|^2 v\\
&=- \eta_R^2 |A|^2v<0.
\end{align*}
A standard application of the maximum principle shows that  $V_R$ does not have any interior maximum and hence $$V_R\leq \max V_R(\cdot, 0)\leq M^2 c(M).$$ The result follows by taking $R\to \infty$.

\item We follow the proof in \cite{EH2} replacing the localization function in that paper by $\eta_R^2$ (where $\eta_R$ is defined by \eqref{def-eta}).  The proof is analogous and we only point out the main steps and differences. 

Following \cite{EH2} we define $k$ such that 
$kv^2\leq \frac{1}{2}$ in the set that $\eta_R\not=0$ and define the function 
$$g= \frac{v^2\, |A|^2}{1-kv^2}.$$
Then 
$$g_t-\Delta g\leq -2k g^2-\frac{2k}{(1-kv^2)^2}|\nabla v|^2 g-2 \frac{v^{-1}}{1-kv^2}  \langle \nabla v, \nabla g\rangle.$$

A similar calculation as in \cite{EH2} where we use \eqref{eqn-hhh} gives that 
\begin{align*}
(\eta^2_R\,g)_t-\Delta (\eta^2_R\,g)&\leq -2k \eta^2_R g^2-\frac{2k}{(1-kv^2)^2}|\nabla v|^2  \eta^2_R g-2 \eta^2_R \frac{v^{-1}}{1-kv^2}  \langle \nabla v, \nabla g\rangle\\
&-
2g |\nabla \eta_R|^2-4 \eta_R \langle \nabla \eta_R , \nabla g\rangle.
\end{align*}

Following again \cite{EH2} we can find a vector function $b$ (that can be explicitly computed, but it is not important) such that

\begin{align*}
(\eta^2_R\,g)_t-\Delta (\eta^2_R\,g)&\leq -2k \eta^2_R g^2+(6+2k^{-1} v^{-2}) g |\nabla \eta_R|^2+ \langle \nabla  (g \eta_R^2), b \rangle.
\end{align*}
Then, observing that  $|\nabla \eta_R|^2\leq M$ we conclude that if $\eta^2_R\,g$ has an interior maximum then 
 $$0\leq -2k \eta^2_R g^2+(6+2k^{-1} v^{-2}) g |\nabla \eta_R|^2\leq -2k \eta^2_R g^2+(6+2k^{-1} v^{-2}) g M $$
or equivalently, 
$$ \eta^2_R g\leq k^{-1} (3+k^{-1} v^{-2}) M.$$ Taking $R$ to infinity  \eqref{interior max} follows since $v\geq 1$.

\smallskip 
\item Finally,   consider $ t \, \eta^2_R\,g$. Then,  we have 
$$(t\eta^2_R\,g)_t-\Delta (t\eta^2_R\,g)\leq -2k \eta^2_R g^2+(6+2k^{-1} v^{-2}) g |\nabla \eta_R|^2+ \langle \nabla  (g \eta_R^2), b \rangle+ \eta^2_R\,g .$$
At a maximum holds
$$ t\, \eta^2_R \, g\leq k^{-1} (3+k^{-1} v^{-2}) M+ M^2,$$ and we conclude  \eqref{interior max in t} by taking $R\to \infty$.

\end{enumerate}

\end{proof}

We will now  prove Theorem \ref{thm-general-np}:

\begin{proof}[ Proof of Theorem \ref{thm-general-np}] 

As in the proof of Theorem \ref{thm-general}, we set $u=u_1$, $\bu=u_2$ and assume without loss of generality that $u_0 \geq 1$ in which case $u, \bu \geq 1$ (this follows from $u_0 \geq 1$ 
and Lemma \ref{lemma-llb}). We define as before  
$$W := w - \zeta=u-\bu - \e \, (t +\e)  \, u^2$$
and  set 
$$T^*= \min \Big (  T, \frac 14, \frac 1{10\bar c} \Big )$$
where $\bar c$ is a uniform constant (to be determined later) and depends on the  constant  $c$ in \eqref{eqn-cond-curv}. 

We   proceed as in the proof of Theorem \ref{thm-general}, but we need to consider an additional case:  {\em the supremum $m^*$ is attained at infinity. }   This means,  
 there exists a  sequence of points $y_k  \in \R^n$ with $|y_k| \to +\infty$ 
and a sequence of times $s_k \in (0, T^*]$, $s_k \to  t_0$  such that 
$$W(y_k, s_k) > \frac{m^*}2 >0.$$

Applying the maximum principle 
we will deduce that $t_0 > 1/8c$ deriving a contradiction to the definition of $T^*$. 
Notice that since our initial data is complete non-compact and the convergence of our solutions to the initial  data is assumed to be uniform only on compact subsets 
of $\R^n$, it is not a'priori guaranteed that  $t_0 >0$, that is at this point we assume that $s_k \to t_0 \in [0,T^*]$.  

\smallskip  

To apply the maximum principle, we employ a parabolic version of the  Omori-Yau maximum principle (see for example in  \cite{Ma}). We define the functions  
$$W_k(x,t) =W(x,t) -t\frac{|x|^2}{C_k^2}, \qquad \mbox{for }\,\,C_k=\max\{|y_k|^2, k\} $$
and we look  at the supremum of $W_k$ in $\R^n \times (0,s_k]$. If this supremum is less than  ${m^*}/4$,  then   $W(y_k,s_k)
\leq \frac{m^*}4  + t \frac{|y_k|^2}{C_k^2}$ and from our  choice of $C_k$ we have $W(y_k,s_k)  \leq  \frac{3m^*}8 < \frac{m^*}2$ for $k \gg 1$, 
contradicting our assumption. 

We deduce  that $m_k:= \sup_{\R^n \times (0, s_k]} W_k > \frac{m^*}4 >0$. Since  $W$ is uniformly bounded (see \eqref{eqn-WWW}) 
this supremum is attained  in the interior at a point $(x_k, t_k) \in \R^n \times (0, s_k]$.  At this point necessarily we have  
\be\label{eqn-omori0}
\begin{split}
&W(x_k,t_k) \geq t_k \,  \frac{|x_k|^2}{C_k^2} >0, \qquad W_t(x_k, t_k)= (W_k)_t(x_k, t_k)+   \frac{|x_k|^2}{C_k^2}\geq 0\\
&D W(x_k,t_k) = \frac{2\, t_k\, x_k}{C_k^2}, \qquad \quad \,\, D_{ij} W(x_k,t_k) \leq  \frac{2 t_k\delta_{ij}}{C_k^2} \leq  \frac{2 t_k\delta_{ij}}{k^2}\end{split}
\ee
where the last inequality is understood in the sense of quadratic forms,  that is  for all $\xi \in \R^n \setminus \{0\}$, $D_{ij} W(x_k,t) \xi_i \xi_j < \frac{2t_k}{k^2} \, |\xi|^2$ holds. Furthermore, notice that since $(x_k, t_k)$ is the maximum for $W_k$ on $\R^n \times (0,s_k]$, we have $W(x_k, t_k)-t_k\frac{|x_k|^2}{C_k^2}\geq W(0,0)$, and because $W \leq \e^{-2}$, we have 
 $$\frac{t_k\,|x_k|^2}{C_k^2}\leq  W(x_k, t_k) - W(0,0) \leq \e^{-2} - W(0,0) = \e^{-2} + \e^2 \, u^2 (0,0) =: M_\e.$$
Then 
\be\label{eqn-Wkkk}
 |D W(x_k,t_k) | = \frac{2 t_k\, |x_k|}{C_k^2} \leq \frac{2  \sqrt{ t_k\, M_\e} }{C_k }\leq \frac{2  \sqrt{ t_k\, M_\e} }{k } = \cO( \frac {\sqrt{t_k}}k).
 \ee
Moreover, since $W_k(x_k, t_k) = m^*_k > \frac{m^*}4 >0$  we have  
$W(x_k,t_k) = W_k(x_k, t_k) + t_k \frac{|x_k|^2}{C_k^2} >\frac{m^*}4 >0.$
Combining these with  \eqref{eqn-omori0} we conclude  the following:
\be\label{eqn-omori}
W(x_k, t_k) > \frac{m^*}4 >0, \quad W_t(x_k, t_k) \geq 0, \quad   |D W(x_k,t_k) | \leq \cO( \frac{\sqrt{t_k}}k),   \quad  D_{ij} W(x_k,t_k)  \leq \frac{2 \delta_{ij}}{k^2}.  
\ee

Hence, we deduce from  \eqref{eqn-w},  \eqref{eqn-zeta},  \eqref{eqn-omori} and the uniform ellipticity of the matrix $a_{ij}$,  that 
\be\label{eqn-Wk} 
- \frac Ck \leq W_t - a_{ij} D_{ij} W <   ( a_{ij} - \ba_{ij}  )\, D_{ij} \bu -  \frac \e 2 u^2 
\ee
 holds    at each point $(x_k, t_k)$. 
Furthermore from $W(x_k,t_k)>0$ we have 
\be\label{eqn-uk}
 (1 - \e (t_k +\e)  u) \, u(x_k,t_k)  + \e  >   \bu(x_k,t_k).  
 \ee

Next,  observe  that the fact that $W(x_k,t_k)>0$ 
 implies that $u(x_k,t_k) $ is bounded (otherwise if  $u(x_{k_l},t_{k_l}) \to +\infty$ for some subsequence, then  $\lim_{l \to +\infty} W(x_{k_l}, t_{k_l})  \to -\infty$). Furthermore, $u(x_k,t_k) $  bounded  and $u, \bu \geq 1$ imply  that $\bu (x_k,t_k)$ is bounded as well. Hence, we may assume without loss of generality that
\be\label{eqn-ubu}
 u(x_k,t_k) \to u^* \qquad \bu(x_k,t_k) \to \bu^* \qquad \mbox{and} \qquad  1 \leq u(x_k,t_k),  \bu(x_k,t_k) \leq u^*+1.
 \ee
Therefore,  our assumption that $u, \bu$ satisfy 
condition \eqref{eq:boundedcase} and the first assertion in Proposition \ref{second fund form bound} applied to $M=u^*+2$  yield
\be\label{eqn-ubDu}
| D u(x_k,t_k) | \leq C( u^*)  \qquad \mbox{and} \qquad  | D \bu(x_k,t_k) | \leq C( u^*).
 \ee
Furthermore, by the third assertion in Proposition \ref{second fund form bound} we have 
\bee\label{eqn-Ab}
t_k \, |A|^2 (x_k, t_k)  \leq C(u^*)  \qquad \mbox{and} \qquad  t_k \, |\bar A|^2 (x_k, t_k) \leq C( u^*).
 \eee
It follows that at the points $(x_k,t_k)$ we have 
\be\label{eqn-hb}
\sqrt{t_k}  \, v |h_i^j|  (x_k, t_k)  \leq C(u^*)  \qquad \mbox{and} \qquad \sqrt{t_k}  \, v |\bar h_i^j | (x_k, t_k) \leq C(u^*)
 \ee
 and also 
\be\label{eqn-ddb}
\sqrt{t_k} \frac{|D_{ij} u|}{\sqrt{1+|Du|^2}}   \leq C(u^*)  \qquad \mbox{and} \qquad \sqrt{t_k} \frac{|D_{ij} \bu|}{\sqrt{1+|D\bu|^2}}  \leq C(u^*). 
 \ee 
These bounds will be used momentarily.

We will next analyze the main term on right hand side of \eqref{eqn-Wk}.
From the definition of $W$ we have that $D\bu(x_k, t_k)=(1- 2\e \, (t_k+\e)\,u )Du - DW$. Then, similarly to \eqref{eqn-aaa}
(the computation here has more terms since $DW \neq 0$) we get 
\begin{equation*}\label{eqn-aak}
\begin{split}
a_{ij} - \ba_{ij} &=  \frac{D_i \bu D_j \bu}{1+|D\bu|^2} - \frac{D_i u D_j u}{1+|Du|^2} = (1-  2\e (t_0+\e) u)^2 \,   \frac{D_i u D_j u}{1+|D\bu|^2} - \, \frac{D_i u  D_j u }{1+|Du|^2}\\
&+ \frac{D_i W D_j W -(1- 2\e \, (t_k+\e)\,u ) (D_i u D_j W+D_i W D_j u)}{1+|D\bu|^2} \\
&=\big(-4\e (t_0+\e) \,u  \big (1- \e (t_0+\e)\, u \big )  + \langle DW, b\rangle\big)  \, \frac{D_i u D_j u}{(1+|Du|^2)(1+|D\bu|^2)}\\&\, + \frac{D_i W D_j W -(1- 2\e \, (t_k+\e)\,u ) (D_i u D_j W+D_i W D_j u)}{1+|D\bu|^2}
\end{split}
\end{equation*}
where $b=2 (1- 2\e \, (t_k+\e)\,u ) Du  -DW$.
Denoting  $$B_{ij}=  \langle DW, b\rangle  \, \frac{D_i u D_j u}{(1+|Du|^2)(1+|D\bu|^2)}+ \frac{D_i W D_j W -(1- 2\e \, (t_k+\e)\,u ) (D_i u D_j W+D_i W D_j u)}{1+|D\bu|^2}$$
we can then express the main term  $(a_{ij} - \ba_{ij}) \, D_{ij} \bu$ on right hand side of \eqref{eqn-Wk} as 
\be\label{eqn-aak}
(a_{ij} - \ba_{ij}) \, D_{ij} \bu = -4\e (t_0+\e) \,u  \big (1- \e (t_0+\e)\, u \big )  \, \frac{D_{ij } \bu D_i u D_j u}{(1+|Du|^2)(1+|D\bu|^2)} + B_{ij} \, D_{ij} \bu. 
\ee

Next  observe that from \eqref{eqn-Wkkk} at $(x_k, t_k) $ we have that 
$$|B_{ij}|\leq C(u^*) \frac{\sqrt{t_k}}{k}$$
which combined  with \eqref{eqn-ddb} yields 
\be \label{eqn-term2}
|B_{ij} D_{ij} \bu |\leq  C \frac{\sqrt{t_k}}{k} (\sqrt{t_k})^{-\frac{1}{2}} = \cO(\frac 1k). 
\ee

To bound the first term on the right-hand side of \eqref{eqn-aak} we use  \eqref{eqn-cond-curv} which in particular implies that 
\be\label{eqn-hhhh} 
\bar{v} \bar{h}^i_j D_i u D_j u\geq -c\, |Du|^2.
\ee
On the other hand,  $\bar h^i_j=  \frac{D_{ij} \bu}{\sqrt{1+|D\bu|^2}} -\frac{D_{lj} \bu D_l \bu D_i \bu }{(1+|D\bu|^2)^{3/2}}$ implies 
\begin{align*} \bar{v} \bar{h}^i_j \, D_i u D_j u&=  \Big (  D_{ij} \bu  -\frac{D_{lj} \bu D_l \bu D_i \bu }{1+|D\bu|^2} \Big )  D_i u D_j u\\
&=  D_{ij}\bu \,D_i u D_j u- \langle D\bu, Du\rangle \frac{D_{ij}\bu}{1+|D\bu|^2} \,  D_i \bu D_j u \\
&=\left(1+|D\bu|^2-(1- 2\e \, (t_k+\e)\,u )^2 |Du|^2\right)\frac{D_{ij} \bu}{1+|D\bu|^2}  D_i u D_j u \\ &\,\quad -(1- 2\e \, (t_k+\e)\,u )\frac{|Du|^2}{1+|D\bu|^2} D_{ij} u  D_j u D_i W
- \frac{\langle DW, Du\rangle}{1+|D\bu|^2} D_{ij} u \, D_i \bu D_j u\\
&=\left(1+|D\bu|^2-(1- 2\e \, (t_k+\e)\,u )^2 |Du|^2\right)\frac{D_{ij} \bu}{1+|D\bu|^2}  D_i u D_j u + \cO(\frac 1k)
\end{align*}
where to derive the last line we combined \eqref{eqn-Wkkk} and \eqref{eqn-ddb} (following a similar estimate as the one we did for $B_{ij} D_{ij}\bu$). 

To further estimate the last line above, we use  $$|D\bu|^2-(1- 2\e \, (t_k+\e)\,u )^2 |Du|^2=\langle DW, D\bu+ (1- 2\e \, (t_k+\e)\,u )\, Du\rangle =\cO\big( \frac{\sqrt{t_k}}{k})$$
concluding that 
$$\bar{v} \bar{h}^i_j \, D_i u D_j u =  \frac{D_{ij} \bu}{1+|D\bu|^2}  D_i u D_j u + \cO(\frac 1k)$$
which in turn, combined with \eqref{eqn-hhhh} yields 
\be\label{eqn-bububu}
\frac{D_{ij} \bu}{1+|D\bu|^2}  D_i u D_j u\geq -c\,  |Du|^2 +\cO\big(\frac{1}{k}\big).
\ee

Finally,   \eqref{eqn-Wk}, \eqref{eqn-aak}, \eqref{eqn-term2} and \eqref{eqn-bububu}  together imply  that as $k\to \infty$
$$0 \leq    4 \bar c \, \e (t_0+\e) \,u^*  \big (1- \e (t_0 +\e)\, u^* \big )   - \frac \e2 \, (u^*)^2.$$
We now use the same argument as in the proof of Theorem \ref{thm-general} to conclude that this is not possible provided that  $t_0 + \e >  \frac 1{8c}$, where $c$ is the constant from \eqref{eqn-cond-curv}. 
Since we have assumed that $t_0 \in (0, T^*]$ and $T^* \leq \frac 1{10 \bar c}$ we derive a contradiction by choosing $\e$ sufficiently small. This shows, that 
contrary to our assumption, $W^*(t_0) < 0$, finishing the proof of the claim.

\end{proof}

\section{The convex case and Harnack inequality}\label{sec-convex} 

In this final section we will state and existence and  uniqueness result for convex, proper,  non-compact  entire graphs  Mean curvature flow solutions
and show that Hamilton's Harnack inequality holds. 

\begin{thm}[Uniqueness  of convex entire graph solutions]
 \label{thm-general-convex} Assume that $u_0: \R^n \to \R$ is a convex  function defining a proper  entire graph 
convex hypersurface $M_0=
\{ (x, u_0(x)): \, x \in \R^n \} \subset \R^{n+1}$.  Let $u_1, u_2 \colon \R^n \times (0,T) \to \R$ be two solutions of \eqref{eqn-IVP} defining two proper smooth convex entire graph solutions $M_t^1 =
\{ (x, u_1(x, t)): \, x \in \R^n \} $ and $M_t^2 =\{ (x, u_2(x, t)): \, x \in \R^n \} $ of MCF \eqref{eqn-MCF} with the same initial data $u_0$, that is  $\lim_{t \to 0} u_1(\cdot, t) =  \lim_{t \to 0} u_2(\cdot, t) =u_0$. Then,  $u_1=u_2$ on $\R^n \times (0,T)$,
that is $M_t^1 = M_t^2$ for all $t \in (0,T)$. 
\end{thm}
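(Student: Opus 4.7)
The plan is to deduce Theorem \ref{thm-general-convex} as an immediate corollary of the general uniqueness result, Theorem \ref{thm-general}. The crucial observation is that for a convex hypersurface all principal curvatures are nonnegative, so the second fundamental form satisfies $h^i_j \geq 0$ as a symmetric tensor. Since the gradient function $v = \langle e_{n+1}, \nu \rangle^{-1}$ is strictly positive on any proper entire graph, we obtain
$$v\, h^i_j \geq 0 \geq -c\, \delta^i_j$$
for any $c > 0$, so the lower curvature bound \eqref{eqn-cond-curv} holds on both $M_t^1$ and $M_t^2$ with, say, $c = 1$, for every $t \in (0,T)$.

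The remaining hypotheses of Theorem \ref{thm-general} are already present in the statement of Theorem \ref{thm-general-convex}: convex functions on $\R^n$ are automatically locally Lipschitz, $M_0$ is assumed to be a proper entire graph, the solutions $u_1, u_2$ are smooth on $\R^n \times (0,T)$, they correspond to proper entire graphs (one can use Lemma \ref{lemma-llb} and Corollary \ref{cor-proper} to confirm that properness persists), and they share the initial data $u_0$ in the limiting sense $\lim_{t \to 0^+} u_i(\cdot,t) = u_0$. Invoking Theorem \ref{thm-general} directly yields $u_1 \equiv u_2$ on $\R^n \times (0,T)$, which is the claim.

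There is essentially no obstacle in the argument, since Theorem \ref{thm-general} has been set up precisely to cover this situation: the point of the convex case is that \eqref{eqn-cond-curv} is free, with $c$ arbitrarily small. I would also note in passing that we do not need a separate argument to show that convexity is preserved under the flow, since convexity of the solutions $M_t^i$ for all $t \in (0,T)$ is already part of the hypothesis. This short deduction is exactly the setup needed to then derive Hamilton's differential Harnack inequality (Corollary \ref{cor:Harnack estimate}) in the convex entire-graph setting by approximation from compact convex solutions, where uniqueness is what lets one identify the limit.
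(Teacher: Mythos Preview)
Your proposal is correct and is essentially the approach the paper takes: convexity gives $v\,h^i_j \geq 0$, so condition \eqref{eqn-cond-curv} is automatic and Theorem \ref{thm-general} applies directly (indeed, the remark immediately following Theorem \ref{thm-general} already announces this). The paper's own proof phrases it as ``apply the maximum principle argument in Theorem \ref{thm-general}'' rather than citing the theorem as a black box, and notes the computation simplifies in the convex case, but the substance is identical to what you wrote.
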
  

\begin{proof}  Now, since our initial data is a  convex proper entire graph
over $\R^n$, we may assume that it lies above the $e_{n+1}=0$ plane, that is $u_0(x) \geq 0$ for all $x \in \R^n$. Furthermore, we have $\lim_{x \to +\infty} u_0(x) = +\infty$
and the same holds for both solutions $u_i(x,t)$, $i=1,2$, namely  $u_i(\cdot,t) \geq 0$ and  $\lim_{x \to +\infty} u_i(x,t) = +\infty$,  for all $t >0$. 

\smallskip 

One then can then  apply the maximum principle argument in  Theorem \ref{thm-general} (actually in the convex case the computation is simpler) to show that for any small number $\epsilon >0$,  one  has $u_1 - u_2  \leq \e t  \, u_1^2 + \e$ and,  similarly,   $u_2 - u_1  \leq \e t u_2^2 + \e$, for all $t \in (0,T)$. Taking $\e \to 0$ readily gives that $u_1=u_2$ for all $t \in (0,T)$. 
\end{proof} 

An immediate consequence of the previous result is that convex graphical MCF solutions can be smoothly approximated by compact ones. 
For any two compact convex hypersurfaces $\Sigma_1, \Sigma_2$ we write that  $\Sigma_1 \prec  \Sigma_2 $ if $\Sigma_2$ encloses $\Sigma_1$
(allowing  $\Sigma_1 \cap   \Sigma_2 \neq \emptyset$).

\begin{cor}\label{cor-approx}  Let   $M_t=
\{ (x, u(x,t)): \, x \in \R^n \} \subset \R^{n+1}$, $t \in (0, +\infty)$, be  a smooth  entire graph   Mean Curvature Flow solution    with initial data 
$M_0=
\{  (x, u_0(x)): \, x \in \R^n \} \subset \R^{n+1}$ which is a proper convex entire graph, normalized in such a way that  $u(0)=\min_{x \in \R^n} u_0(x) =0$.

Then, $M_t$ can be approximated by  a sequence $M^i_t$  of compact  convex Mean curvature flow solutions. 
More precisely,  the surfaces $\Sigma^i_t$  are  reflection symmetric with respect to the hyperplane $\{ x_{n+1} = i \}$ and  their lower parts  $\hat  \Sigma^i_t  := \Sigma^i_t \cap \{ x_{n+1}  < i \} $ converge, as $i \to +\infty$, to $M_t$, smoothly  on compact subsets of $\R^{n+1} \times (0, +\infty)$. 
\end{cor}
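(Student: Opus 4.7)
The plan is to construct compact convex MCF approximants by reflecting and mollifying the initial graph, evolve them by the classical compact MCF theory, and then invoke Theorem~\ref{thm-general-convex} to identify the subsequential limit with $M_t$.

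First, I would build the initial approximation. For each sufficiently large integer $i$, let $\Omega_0^i := \{x\in\R^n : u_0(x) \leq i\}$; by convexity and properness of $u_0$ this is a bounded convex set. Form the closed convex body
\[
K_0^i := \{(x,y) \in \R^{n+1} : x \in \Omega_0^i,\ u_0(x) \leq y \leq 2i - u_0(x)\},
\]
whose boundary is a Lipschitz closed convex hypersurface, reflection-symmetric across the hyperplane $\{x_{n+1} = i\}$ and coinciding with the graph of $u_0$ below it. I would mollify this boundary in a thin neighborhood of the equator ridge $\{u_0 = i\}$, preserving both convexity and the reflection symmetry (e.g.\ via Moreau--Yosida regularization or infimal convolution localized near the ridge), producing a smooth strictly convex closed hypersurface $\Sigma_0^i$ that agrees with the graph of $u_0$ on $\{u_0 \leq i - \delta_i\}$ for some $\delta_i \downarrow 0$.

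Next, I would evolve. Huisken's classical theorem gives a unique smooth convex MCF solution $\Sigma_t^i$ on a maximal interval $[0,T_i)$, with $T_i \to \infty$ as $i \to \infty$ since the enclosed volume grows unboundedly. Uniqueness of compact MCF propagates the reflection symmetry, so $\{x_{n+1} = i\}$ meets $\Sigma_t^i$ orthogonally along a convex equator; consequently the lower cap $\hat\Sigma_t^i := \Sigma_t^i \cap \{x_{n+1} < i\}$ is the graph of a smooth convex function $u^i(\cdot,t)$ on an open convex domain $\Omega_t^i \subset \R^n$, with $u^i(x,t) \to i$ as $x \to \partial\Omega_t^i$. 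Then I would pass to the limit: the Ecker--Huisken local interior estimates~\cite{EH1,EH2} applied to the graphical solutions $u^i$ give uniform bounds on $|Du^i|$ and all higher derivatives on any $K \times [t_0, T]$ with $K \subset \R^n$ compact and $0 < t_0 < T < \infty$, depending only on the local Lipschitz constant of $u_0$ on a slightly larger set. A diagonal Arzel\`a--Ascoli argument then extracts a subsequence converging smoothly on compact subsets to a smooth solution $\tilde u \colon \R^n \times (0,\infty) \to \R$ of graphical MCF, convex in $x$, with initial value $u_0$ in the locally uniform sense (since $u^i(\cdot,0) = u_0$ on any fixed compact set once $i$ is large). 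Properness of $\tilde u(\cdot,t)$ follows from properness of $u_0$ together with Corollary~\ref{cor-proper}. Applying Theorem~\ref{thm-general-convex} to $u$ and $\tilde u$ yields $\tilde u \equiv u$, and uniqueness of the limit upgrades the subsequential convergence to convergence of the full sequence.

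The principal obstacle is the convex-geometric construction in the first step: producing a smooth, strictly convex, closed, reflection-symmetric hypersurface $\Sigma_0^i$ that coincides with the graph of $u_0$ away from the equator. This is standard but somewhat delicate, since ordinary convolution of the Lipschitz hypersurface need not respect the reflection symmetry and the near-equator smoothing must simultaneously round off the ridge and match the graph smoothly on both sides. A secondary technical point is verifying the uniform local smoothness of the $u^i$, but this is precisely what the Ecker--Huisken local estimates deliver once the initial graphs are controlled in $C^{0,1}_{\mathrm{loc}}$ independently of $i$.
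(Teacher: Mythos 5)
Your proposal follows the same overall scheme as the paper: double the graph across the hyperplane $\{x_{n+1}=i\}$, evolve the resulting compact convex hypersurface, use the Ecker--Huisken interior estimates to extract a locally smooth subsequential limit, and identify the limit with $M_t$ via Theorem~\ref{thm-general-convex}. The one substantive difference is your extra mollification step near the equatorial ridge: you smooth $\partial K_0^i$ before evolving, whereas the paper evolves the Lipschitz hypersurface $\Sigma_0^i=\partial\cD_0^i$ directly, appealing to the fact that closed convex Lipschitz hypersurfaces admit a unique smooth mean curvature flow that becomes strictly convex instantly. What the paper's route buys is exactly the thing you flag as your ``principal obstacle'': by not mollifying, it avoids having to construct a symmetric, convexity-preserving regularization that matches the graph of $u_0$ away from the ridge. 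What your route buys in exchange is the ability to cite Huisken's theorem verbatim for smooth initial data, at the cost of that (routine but fiddly) convex-geometric lemma. Both are legitimate. One small point you pass over: before applying the Ecker--Huisken estimates you need to guarantee that, for every fixed $T$ and $R$, the lower caps $\hat\Sigma_t^i$ with $i$ large are graphs over a domain containing $B_R\subset\R^n$ for all $t\in[0,T]$, with a gradient bound on $B_{2R}$ that is uniform in $i$. The paper proves this as a separate claim by trapping a large ball $B^{n+1}_{4R}(({\bf 0},i))$ inside $\Sigma^i_t$ and combining convexity, reflection symmetry, and the local gradient estimate; your write-up asserts the graphicality and the uniform local bounds somewhat implicitly, and this step deserves to be made explicit since the domain $\Omega^i_t$ of the graph shrinks with $t$.
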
 

\begin{proof}
From our assumptions we have  $M_t= \{ (x, u(x,t)) : x \in \R^n \}$, for all $t \in (0, +\infty)$ and that $u(\cdot, t) \geq   0$ for all $t \geq 0$, since 
 we have normalized our initial data so that $u(0) = \min_{ x \in \R^n} u_0(x) = 0.$ Furthermore, since $u_0(x)$ is assumed to be proper we have
 $\lim_{x \to +\infty} u(x,t) = +\infty$ for all $t \geq 0$. 

\smallskip 

For each integer  $i \geq  1$, we define the Lipschitz domains 
$$\cD^i_0 = \{ (x, x_{n+1} ) \in \R^{n+1} :   u_0(x) \leq x_{n+1} \leq 2i - u_0(x) \}$$
and we let  $\Sigma^i_0 = \partial \cD^i_0$. Our assumption that $u(0)=0$ guarantees that $\cD^i_0 \neq \emptyset$ for all $i \geq 1$.
Note that  $\Sigma^i_0 \subset  \R^{n+1}$ is just the closed  hypersurface that  consists by $M_0 \cap \{ x_{n+1} \leq i \}$
and its  reflection with respect to the hyperplane $x_{n+1} =i$. Furthermore, each $\Sigma_0^i$ is convex and Lipschitz continuous. 

\smallskip

Standard MCF theory shows that for any $i \geq 1$, there exists a unique smooth mean curvature flow  $\Sigma^i_t$ starting at $\Sigma_0^i$. 
The solutions $\Sigma^i_t$ exists up to times $T^i$, they satisfy   $\Sigma^i_t \prec \Sigma^{i+1}_t $ ($\Sigma^{i+1}_t$ encloses $\Sigma_t^i$),   and $\lim_{i \to +\infty} T^i =+\infty$. The strong maximum principle guarantees that
each $\Sigma_t^i$, $0 < t < T^i$  is strictly convex. 
Furthermore, $\Sigma^i_t$ is reflection symmetric with respect to the hyperplane $\{ x_{n+1}=i \}$, since $\Sigma_0^i$ is by construction.

Denote by  $\hat  \Sigma^i_t $
to be the lower half  of $\Sigma^i$, that is 
 $$\hat  \Sigma^i_t  := \Sigma^i_t \cap \{ x_{n+1}  < i \}.$$
Also, for any point $\bf{x_0}  \in \R^{n+1}$ let us  denote by $B_R^{n+1}(\bf{x_0})$ the ball in $\R^{n+1}$ of radius $R$ centered at ${\bf x_0}$. 

\begin{claim} Fix $T >0$.  For any  $R > 1$, there exists an integer $i_R$ such as long as  $i \geq i_R$,  the  lower part of 
$\hat \Sigma_t^i \cap B_{2R}^{n+1}({\bf 0}) $, $t \in [0,T]$  can be written as a graph $\big \{ (x, u^i(x,t))  \,: \, |x| \leq R  \big \}$   and   satisfies  a uniform in $i$ gradient bound  which is independent of $i$ and depends only on $R$ and $M_0$. 
\end{claim}

\begin{proof} Fix $T >0$ and assume that $i$ is chosen sufficiently large so that $T^i > T$. Furthermore, given any 
 $R >1$, we may choose  $i_R$ sufficiently large so that $T \ll R$ and if ${\bf x^i_0}=({\bf 0},i) \in \R^{n+1}$, then 
$B^{n+1}_{4R}({\bf x^i_0}) \prec \Sigma^i_t$,  for all $i \geq i_R$ and all $t \in [0,T]$.

The  convexity and symmetry of the solutions $\Sigma^i_t$ then imply  that for any $i \geq i_R$,  $\hat \Sigma_t^i \cap B_{3R}^{n+1}({\bf 0}) $,  $t \in [0,T]$  can be written as a graph 
$\big \{ (x, u^i(x,t))  \,: \, |x| \leq 3R  \big \}$.  
Hence, it remains to show the uniform gradient bound of  $\hat \Sigma_t^i \cap B_{2R}^{n+1}({\bf 0})$, $t \in [0,T]$ for all $i \geq i_R$. 
This readily  follows from the local gradient bound in \cite{EH2}
and the fact that $u^i(x,0) = u_0(x)$ for all $i \geq i_R$,  which implies  that  $\Sigma_0^i \cap B_{3R}^{n+1}({\bf 0})$, $i \geq i_R$  satisfy a uniform gradient bound. 

%

\end{proof} 

The results in \cite{EH2} then imply that $\hat \Sigma_t^i \cap B_{R}^{n+1}({\bf 0}) $, $t \in [0,T]$, $i \geq i_R$ 
have   uniformly bounded second fundamental forms. 
More precisely, there exists a constant $C_{R,T}$ that  is  independent of $i$  such that   the second fundamental form $|A^i|$ of $\Sigma^i$ satisfies the bound 
\be\label{eqn-AAA} \sup_{\hat \Sigma_t^i \cap B_R^{n+1}({\bf 0})} |A^i|   \leq C_{R,T} \, t^{-1/2}, \qquad t \in (0,T]
\ee
provided that $i \geq i_R$.  

One can then pass to the limit (over a subsequence
$i_k \to +\infty$)
and obtain a smooth entire graph  mean curvature flow  solution $\hat  M_t$, $t \in (0,T)$  whose second fundamental form satisfies the bound 
\be\label{eqn-AAA} \sup_{\hat M_t \cap B_R^{n+1}({\bf 0})} |A|   \leq C_{R,T} \, t^{-1/2}, \qquad t \in (0,T]. 
\ee
Standard arguments then imply  that if $\hat M_t = \{  (x, \hat u(x, t)): \, x \in \R^n \} $, then $\lim_{t \to 0} \hat  u(x,t) = u_0(x)$. 
Since $x_{n+1}=u_0(x)$ is proper,  $x_{n+1}=\hat u(x,t)$ is proper as well. 
Hence, Theorem  \ref{thm-general-convex} guarantees that  $u= \hat  u$ on $\R^n \times (0,T)$. Since $T >0$ was arbitrary,   we conclude that $u = \hat  u$
on $\R^n \times (0,+\infty)$ finishing the proof of the corollary. 

\smallskip

\end{proof} 

\begin{remark}
Our methods can be applied to study the uniqueness  of the   (convex) solutions that are analyzed by X.-J. Wang in \cite{XW}. More precisely,  in that paper, the author studies convex translating solutions to Mean Curvature flow via a level set method. In the non-compact case, those solutions are obtained via taking limits and our techniques can be used as an alternative proof of the uniqueness of such limits. We leave the details to the interested reader.
\end{remark}

\smallskip 

An immediate consequence of Corollary \ref{cor-approx}  is that Hamilton's  Harnack inequality holds  for entire convex graphs.

\begin{cor} [Hamilton's Harnack estimate]\label{cor:Harnack estimate}
Any smooth convex proper entire graph solution $M_t$, $t \in (0,+\infty) $  of Mean curvature flow satisfies Hamilton's Harnack differential inequality, namely for any tangent vector field $V$, 
\be\label{eqn-harnack}
\frac{\partial H}{\partial t} + 2 \langle \nabla H, V \rangle + h(V,V) + \frac H{2t} \geq 0.
\ee
\end{cor}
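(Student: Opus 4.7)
The plan is to obtain the Harnack inequality on the entire graph $M_t$ by passing to the limit in Hamilton's original Harnack inequality applied to the compact convex approximations provided by Corollary \ref{cor-approx}. Hamilton's Harnack estimate is known to hold for any smooth, compact, strictly convex (weakly convex is enough by approximation) solution of Mean Curvature Flow; this is the classical result in Hamilton's paper on Harnack estimates for MCF. Thus the strategy is purely a limiting/approximation argument based on the machinery we have just set up.

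First I would recall from the proof of Corollary \ref{cor-approx} that the approximating surfaces $\Sigma^i_t$ are compact, smooth, and strictly convex for $t \in (0, T^i)$ with $T^i \to +\infty$. Hamilton's Harnack inequality therefore applies to each $\Sigma^i_t$: for every tangent vector field $V^i$ on $\Sigma^i_t$ one has
\begin{equation*}
\frac{\partial H^i}{\partial t} + 2 \langle \nabla H^i, V^i \rangle + h^i(V^i, V^i) + \frac{H^i}{2t} \geq 0
\end{equation*}
pointwise on $\Sigma^i_t$, where $H^i$, $\nabla H^i$, $h^i$ denote the mean curvature, the gradient of $H^i$ along $\Sigma^i_t$, and the second fundamental form of $\Sigma^i_t$, respectively.

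Next, fix a point $p \in M_t$ at a time $t_0 > 0$ and a tangent vector $V \in T_p M_{t_0}$. By Corollary \ref{cor-approx}, the lower halves $\hat\Sigma^i_t$ converge smoothly on compact subsets of $\R^{n+1} \times (0, +\infty)$ to $M_t$. In particular, there exist points $p_i \in \hat\Sigma^i_{t_0}$ with $p_i \to p$, and tangent vectors $V_i \in T_{p_i} \hat\Sigma^i_{t_0}$ (obtained, e.g., by orthogonal projection of $V$ onto $T_{p_i} \hat\Sigma^i_{t_0}$) such that $V_i \to V$. Because smooth convergence on compact sets entails $C^\infty$-convergence of all geometric quantities (first and second fundamental forms, their covariant derivatives, and the time derivatives, which can be reexpressed via the flow equation in terms of spatial derivatives), we have $H^i(p_i, t_0) \to H(p, t_0)$, $\nabla H^i(p_i, t_0) \to \nabla H(p, t_0)$, $\partial_t H^i(p_i, t_0) \to \partial_t H(p, t_0)$, and $h^i(V_i, V_i) \to h(V, V)$. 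Passing to the limit in the Harnack inequality for $\Sigma^i$ at $(p_i, t_0)$ yields \eqref{eqn-harnack} at $(p, t_0)$.

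The only delicate point is ensuring that the tangent vectors $V_i$ really converge to $V$ with the right inner product structure, but this is automatic once the embeddings converge in $C^\infty$ on a neighborhood of $p$: one can parametrize $\hat\Sigma^i_{t_0}$ and $M_{t_0}$ over a common coordinate chart near $p$, express $V$ in that chart, and use the $C^\infty$-convergence of the induced metrics to conclude the convergence of all intrinsic quantities. This step is therefore routine once one has the smooth approximation in Corollary \ref{cor-approx}; the main content of the proof is entirely carried by the approximation result, which in turn relied on the uniqueness Theorem \ref{thm-general-convex}.
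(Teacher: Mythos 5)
Your proposal is correct and takes the same route as the paper: apply Hamilton's Harnack inequality to each compact convex approximation $\Sigma^i_t$ from Corollary \ref{cor-approx} and pass to the limit using the smooth convergence on compact subsets of $\R^{n+1}\times(0,+\infty)$. The paper's proof is exactly this, stated more briefly; your additional remarks on the convergence of tangent vectors and geometric quantities just make the limiting step explicit.
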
 

\begin{proof} Let $\Sigma_i^t$ be approximating sequence of compact convex solutions which were constructed in Corollary \ref{cor-approx}.
Each of them satisfy the Harnack differential inequality \eqref{eqn-harnack}. Passing to the smooth limit on compact sets, we conclude that 
\eqref{eqn-harnack} also holds for our complete non-compact solution $M_t$, for all $t \in (0,+\infty)$. 

\smallskip

\end{proof}


\begin{thebibliography}{11}

\bibitem{BLL} {\sc T. Bourni, M. Langford and S. Lynch,}  {\em Collapsing and noncollapsing in convex ancient mean curvature flow}. \url{ https://arxiv.org/pdf/2106.06339.pdf}.

\bibitem{CNS}  {\sc L. Caffarelli, L. Nirenberg, and  J. Spruck, }  {\em On a form of Bernstein's theorem}, 
Analyse Math. et. Appl. (1988), 55-66, Gauthier-Villars, Paris. 




\bibitem{CZ0}   {\sc K.-S. Chou and X.-P. Zhu,} {\em Shortening complete plane curves}, J. Diff. Geom. {\bf 50}  (1998), 471--504.

\bibitem{CZ1}  {\sc B.-L. Chen and X.-P. Zhu,} {\em  Uniqueness of the Ricci flow on complete noncompact manifolds}, J. Diff. Geom. {\bf 74}  (2006), 119--154.

\bibitem{CY}  {\sc B.-L. Chen and L. Yin,} 
  {\em Uniqueness and pseudolocality theorems of the mean curvature flow},  Comm. Anal. Geom., {\bf 15} (2007), 
 435--490.


\bibitem{DKb} {\sc P. Daskalopoulos and C.  Kenig}, {\em Degenerate diffusions: Initial value problems and local regularity theory},  EMS Tracts in Mathematics, 1,  European Mathematical Society (EMS), Z\"urich (2007). 

\bibitem{Hamilton} {\sc R.S. Hamilton}, {\em Harnack estimate for the mean curvature flow.}  J. Diff. Geom. {\bf  41} (1995): 215–226.

\bibitem{HP} {\sc M.A. Herrero and M. Michel }, {\em The Cauchy problem for $u_t=\Delta u^m$ when $0 < m < 1$},  Trans. Amer. Math. Soc. {\bf 291}  (1985),  145--158.

\bibitem{EH1} {\sc K. Ecker and  G. Huisken},   {\em Mean curvature evolution of entire graphs}, Annals of Math.  {\bf 130} (1989),  453--471. 

\bibitem{EH2} {\sc K. Ecker  and  G. Huisken}, {\em Interior estimates for hypersurfaces moving by mean curvature}, 
Invent. Math.  {\bf 105} (1991), 547--569.


\bibitem{CM}  {\sc C. Mantegazza.
 }  {\em  Lecture notes on mean curvature flow,} 
volume 290 of Progress in Mathematics. Birkh\"auser/Springer Basel AG, Basel, 2011.

\bibitem{Ma}{\sc J. Ma}, Parabolic Omori-Yau maximum principle for mean curvature flow and some applications,
 Journal of Geometric Analysis, {\bf Vol. 28}, No. 4  (2018) 3183--3195.

\bibitem{SS}{\sc M. S\'aez and O. Schn\"urer}, {\em Mean curvature flow without singularities},  J. Differential Geom. {\bf 97} (2014) 545--570.  

\bibitem{S} {\sc M.B. Schulz,}  {\em Instantaneously complete Yamabe flow on hyperbolic space}, Calc. Var. 58, {\bf 190 } (2019) 30 pp. 

\bibitem{PT}{\sc P. M. Topping}, {\em
Uniqueness of instantaneously complete Ricci flows},
Geom. Topol. {\bf 19} (2015)  1477-1492. 

\bibitem{XW}{\sc X.-J. Wang}, {\em Convex solutions to the mean curvature flow}, Ann. of Math. (2) {\bf 173}, no. 3, (2011) 1185--1239.
  		


\end{thebibliography}
\end{document}